\numberwithin{equation}{section}
\newcommand{\calC}{{\mathcal C}}
\newtheorem{theorem}{Theorem}[section]
\newtheorem{lemma}{Lemma}[section]
\newtheorem{proposition}{Proposition}[section]
\newtheorem{corollary}{Corollary}[section]
\newtheorem{definition}{Definition}[section]
\newtheorem{remark}{Remark}[section]
\def\Xint#1{\mathchoice
   {\XXint\displaystyle\textstyle{#1}}%
   {\XXint\textstyle\scriptstyle{#1}}%
   {\XXint\scriptstyle\scriptscriptstyle{#1}}%
   {\XXint\scriptscriptstyle\scriptscriptstyle{#1}}%
   \!\int}
\def\XXint#1#2#3{{\setbox0=\hbox{$#1{#2#3}{\int}$}
     \vcenter{\hbox{$#2#3$}}\kern-.5\wd0}}
\def\aint{\Xint\diagup}
\newcommand{\qed}{\hfill\fbox{}\par\vspace{.2cm}}
\newcommand{\bke}[1]{\left( #1 \right)}
\newcommand{\bkt}[1]{\left[ #1 \right]}
\newcommand{\bket}[1]{\left\{ #1 \right\}}
\newcommand{\norm}[1]{\left\Vert #1 \right\Vert}
\newcommand{\abs}[1]{\left| #1 \right|}
\newenvironment{proof}{{\bf Proof.}} {\hfill\qed}
\newenvironment{pfthm1}{{\par\noindent\bf
           Proof of Theorem \ref{main-thm-boundary-v}. }}{\hfill\fbox{}\par\vspace{.2cm}}
\newenvironment{pfthm2}{{\par\noindent\bf
           Proof of Theorem \ref{main-thm-boundary-w}. }}{\hfill\fbox{}\par\vspace{.2cm}}
\newenvironment{pfthm3}{{\par\noindent\bf
           Proof of Theorem \ref{main-theorem-interior}. }}{\hfill\fbox{}\par\vspace{.2cm}}
\newenvironment{pf-mhd-decay}{{\par\noindent\bf
           Proof of Lemma \ref{mhd-decay}. }}{\hfill\fbox{}\par\vspace{.2cm}}
\newcommand{\R}{{ \mathbb{R}  }}
\begin{document}
\renewcommand{\thesection}{\arabic{section}}

\newcommand{\pr}{\partial}
\newcommand{\nl}{\vskip 1pc}
\newcommand{\co}{\mbox{co}}

\newcommand{\calT} {\mathcal T}

\title{Boundary regularity criteria for suitable weak solutions of
the magnetohydrodynamic equations}

\author{Kyungkeun Kang and Jae-Myoung Kim}
\date{}
\maketitle

\begin{abstract}
We present some new regularity criteria for suitable weak solutions
of magnetohydrodynamic equations near boundary in dimension three.
We prove that suitable weak solutions are H\"older continuous near
boundary provided that either the scaled $L^{p,q}_{x,t}$-norm of the
velocity with $3/p+2/q\le 2$, $2<q<\infty$, or the scaled
$L^{p,q}_{x,t}$-norm of the vorticity with $3/p+2/q\le 3$,
$2<q<\infty$ are sufficiently small near the boundary.
\end{abstract}

\section{Introduction}

We study the regularity problem for {\it suitable }weak solutions
$(u, b, \pi): Q_T\rightarrow \R^3\times\R^3\times\R$ of the
three-dimensional incompressible magnetohydrodynamic (MHD) equations
\begin{equation}\label{MHD}
\left\{
\begin{array}{ll}
\displaystyle u_t- \triangle u +(u \cdot
\nabla) u - (b\cdot \nabla) b  +\nabla \pi= 0\\
\vspace{-3mm}\\
\displaystyle b_t - \triangle b +(u \cdot
\nabla) b - (b\cdot \nabla) u  = 0\\
\vspace{-3mm}\\
\displaystyle \text{div} \ u =0 \quad \text{and}\quad \text{div} \
b=0 ,\\
\vspace{-3mm}\\
\displaystyle u(x,0)=u_0(x), \quad b(x,0)=b_0(x)
\end{array}\right.
\,\,\, \mbox{ in } \,\,Q_T:=\R^3_{+}\times [0,\, T).
\end{equation}
Here $u$ is the flow velocity vector, $b$ is the magnetic vector and
$\displaystyle\pi=p+ \frac{\abs{b}^2}{2}$ is the magnetic pressure.
The boundary conditions of $u$ and $b$ are given as no-slip and slip
conditions, respectively, namely
\begin{equation}\label{noslip-slip}
u=0 \quad \text{and}\quad b\cdot \nu=0,\, \ (\nabla \times b)\times
\nu=0,\qquad \mbox{ on }\,\,\partial\R^3_+,
\end{equation}
where $\nu=(0,0,-1)$ is the outward unit normal vector along
boundary $\partial\R^3_+$. By suitable weak solutions we mean
solutions that solve MHD equations in the sense of distribution and
satisfy the local energy inequality (see Definition \ref{sws-3dnse}
in section 2 for details).

The MHD equations describe the dynamics of the interaction of moving
conducting fluids with electro-magnetic fields which are frequently
observed in nature and industry, e.g., plasma liquid metals, gases,
two-phase mixtures (see e.g. \cite{D} and \cite{DL}).

Let $x=(x_1,x_2,0)\in\partial\R^3_+$. For a point $z=(x,t)\in
\partial\R^3_+\times (0,T)$, we denote
\[
B_{x,r}=:\{y\in\R^3: \abs{y-x}<r\},\quad B^{+}_{x,r}:=\{y \in
B_{x,r}: y_3>0 \},
\]
\[
Q_{z,r}=:B_{x,r}\times (t-r^2,t),\quad Q^{+}_{z,r}:=\{(y,t) \in
Q_{z,r}: y_3>0 \},\quad r<\sqrt{t}.
\]
We say that solutions $u$ and $b$ are regular at $z\in
\overline{\R^3_+}\times (0,T)$ if $u$ and $b$ are H\"older
continuous for some $Q^+_{z,r}$, $r>0$. Otherwise, it is said that
$u$ and $b$ are singular at $z$.

We list some known results for MHD equations relevant to our
concern, in particular regarding regularity conditions in terms of
scaled invariant quantities.

It was shown in \cite{DL} that weak solutions for MHD equations
exist globally in time and in the two-dimensional case weak
solutions become regular (compare to \cite{L34} and \cite{H} for the
NSE). In the three-dimensional case, as shown in \cite{ST83}, if a
weak solution pair $(u,\, b)$ are additionally in $L^{\infty}(0,
 \, T; H^1({\mathbb{R}}^3))$, $(u,\, b)$ become regular.
Although many significant contributions have been made on the
existence, uniqueness and regularity of weak solutions to the MHD
equations, as in the NSE, regularity question, however, remains open
in dimension three.

In case that $\Omega=\R^3$, it was proved in \cite{He-Xin05-jde}
that a weak solution pair $(u, b)$ become regular if a certain type
of scaling invariant integral conditions for velocity field, often
referred as Serrin's condition, is additionally assumed (see e.g.
\cite{P59}, \cite{JS62}, \cite{L67}, \cite{FJR72}, \cite{Sohr83} for
the NSE). Recently, the authors have obtained similar results in the
case that $\Omega$ is a bounded domain or half space (see
\cite[Theorem 1]{KK12}) (refer to \cite{G86} for the NSE). The local
interior case of Serrin's condition including limiting case
$L^{3,\infty}_{x,t}$ was treated for MHD equations in \cite{MNS07}
(compare to \cite{ESS03}, \cite{AS07} for the NSE).

For a local case, various types of $\epsilon-$regularity criteria
for suitable weak solutions have been also established in terms of
scaled norms. Among others, it was shown in \cite{Vya10} that
suitable weak solutions become regular near a boundary point $z$ if
the following conditions are satisfied: There exists $\epsilon>0$
such that
\[
\limsup_{r\rightarrow 0}\frac{1}{r} \int_{Q^{+}_{z,r}}|\nabla
b(y,s)|^{2}dyds<\infty,\qquad \limsup_{r\rightarrow 0}\frac{1}{r}
\int_{Q^{+}_{z,r}}|\nabla u(y,s)|^{2}dyds<\epsilon.
\]
Other types of conditions in terms of scaled invariant norms near
boundary are also found in \cite{Vya11} (compare to \cite{VS82},
\cite{GAS}, \cite{K04}, \cite{VAS}, \cite{GKT06}, \cite{SSS06},
\cite{W10} for the NSE). We also refer to \cite{He-Xin05-jfa},
\cite{KL09} and \cite{Vya08} in the interior case for MHD equations
(compare to \cite{VS76}, \cite{CKN}, \cite{S88}, \cite{T90},
\cite{Lin98}, \cite{LS99}, \cite{GKT07} for the NSE).

Here we emphasize that for the global case, i.e. $\Omega=\R^3$,
additional conditions are imposed on only velocity field but not on
the magnetic field. For local interior and boundary cases, however,
known results require control of some scaled norms with scaled
factors of magnetic fields as well as those of the velocity fields.

The motivation of our study is to establish new regularity criteria
for MHD equations depending only on velocity fields for local cases.
To be more precise, main objective of this paper is to present new
sufficient conditions, not relying on magnetic fields, for the
regularity of suitable weak solutions to the MHD equations near
boundary as well as in the interior.

While preparing this paper, the authors have become to know that,
very recently, Wang and Zhang showed that local interior regularity
can be ensured by the control of only scaled norm of velocity
fields. More precisely, interior regularity criteria shown in
\cite{WZ12} is the following:
\begin{equation}\label{wz-int-rc}
\limsup_{r\rightarrow 0}r^{-(\frac{3}{p}+\frac{2}{q}-1)}
\norm{\norm{u}_{L^p(B_{x,r})}}_{L^q(t-r^2,t)}<\epsilon,
\end{equation}
where $ 1\leq \frac{3}{p}+\frac{2}{q}\leq 2$ with $1\leq q\leq
\infty$. We have also proved independently the same result as in
\cite{WZ12} and since we think that our proof is a different version
to that in \cite{WZ12}, its details are given in Appendix (see
Theorem \ref{main-theorem-interior}). Our main concern is, however,
to obtain new regularity conditions near boundary. Let
$x_0\in\partial\R^3_+$ be a boundary point in a half space. We
expect that our analysis would also hold in a smooth boundary as in
the case of flat boundary, but our study is restricted, in this
paper, to the case of $\R^3$, whose boundary is flat.

Now we are ready to state the first part of our main results.

\begin{theorem}\label{main-thm-boundary-v}
Let $(u,b,\pi)$ be a suitable weak solution of the MHD equations
\eqref{MHD} according to Definition \ref{sws-3dnse}. Suppose that
for every pair $p,q$ satisfying $\frac{3}{p}+\frac{2}{q}\leq 2$, $\
2< q \le\infty$ and $(p,q)\neq (\frac{3}{2}, \infty)$,
there exists $\epsilon>0$ depending only on $p,q$ such that for some
point $z=(x,t)\in\partial \R_{+}^3\times(0,T)$ $u$ is locally in
$L_{x,t}^{p,q}$ near $z$ and
\begin{equation}\label{b-con2}
\limsup_{r\rightarrow 0}r^{-(\frac{3}{p}+\frac{2}{q}-1)}
\norm{\norm{u}_{L^p(B^{+}_{x,r})}}_{L^q(t-r^2,t)}<\epsilon.
\end{equation}
Then, $u$ and $b$ are regular at $z$.
\end{theorem}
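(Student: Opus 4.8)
The plan is to prove this boundary $\epsilon$-regularity criterion by a \emph{bootstrapping} argument that upgrades the scaled velocity control \eqref{b-con2} into the strong small-energy hypothesis needed to invoke a known boundary regularity result for the MHD equations (of the type in \cite{Vya10}, \cite{Vya11}). The core difficulty is that the criterion involves \emph{only} the velocity $u$, whereas the natural $\epsilon$-regularity statements for MHD require smallness of scaled quantities built from \emph{both} $u$ and $b$. So the main work is to show that the magnetic field $b$ is automatically controlled near $z$ once $u$ satisfies \eqref{b-con2}.

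First I would exploit the scaling structure. For a suitable weak solution, introduce the dimensionless scaled quantities on parabolic cylinders $Q^+_{z,r}$ — the scaled energy $E(r)$, the scaled dissipation, the scaled pressure norm, and the scaled $L^3$-type norms $A_u(r),A_b(r)$ for velocity and magnetic field — each normalized so as to be invariant under the natural parabolic scaling $u_\lambda(y,s)=\lambda u(\lambda y,\lambda^2 s)$, $b_\lambda(y,s)=\lambda b(\lambda y,\lambda^2 s)$ of \eqref{MHD}. The hypothesis \eqref{b-con2} says precisely that a scaled space-time norm of $u$ is smaller than $\epsilon$ on all sufficiently small cylinders. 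By Hölder's inequality and interpolation (using $\tfrac3p+\tfrac2q\le 2$ with $2<q\le\infty$, $(p,q)\ne(\tfrac32,\infty)$, which is exactly the subcritical-to-critical Serrin range), this smallness transfers to control of the scaled $L^3_{x,t}$ norm of $u$ and of the convective terms.

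Next I would close the estimate for $b$. The key observation is that in the $b$-equation the only inputs are the transport term $(u\cdot\nabla)b$ and the stretching term $(b\cdot\nabla)u$, together with the slip boundary condition \eqref{noslip-slip}; there is no pressure and no self-coupling of $b$ that is not mediated by $u$. Testing the local energy inequality for $b$ against a suitable cutoff, and using that the scaled norm of $u$ is small, one obtains a Grönwall- or iteration-type inequality of the form $A_b(\theta r)\le C\theta^{\alpha}A_b(r)+C\,(\text{small})\cdot(A_b(r)+E(r))$ on a dyadic scale $\theta\in(0,1)$. The smallness of the $u$-contribution, combined with choosing $\theta$ small to beat the geometric factor, forces the scaled magnetic energy $A_b(r)$ and the scaled velocity energy together to decay as $r\to 0$; this is where the assumption that the boundary is flat (so that one has explicit control of the pressure via the Stokes/Poisson representation on the half space and of $b$ via the slip condition) is used, and it is the step I expect to be the main obstacle — the magnetic stretching term $(b\cdot\nabla)u$ is borderline and must be absorbed using the smallness of $u$ rather than of $b$.

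Finally, once both the scaled velocity energy and the scaled magnetic energy are shown to be small on some cylinder $Q^+_{z,\rho}$, I would feed this into an already-established boundary $\epsilon$-regularity lemma for suitable weak solutions of MHD, which guarantees Hölder continuity of $(u,b)$ on a smaller cylinder and hence regularity at $z$. The endpoint and intermediate cases of $(p,q)$ would be handled uniformly by the interpolation in the first step, with the excluded pair $(\tfrac32,\infty)$ reflecting the genuine failure of the argument at the critical $L^\infty_t L^{3/2}_x$ scaling; throughout, the boundary conditions \eqref{noslip-slip} are what allow the half-space pressure estimates and the reflection/extension arguments to go through exactly as in the interior case treated in Theorem \ref{main-theorem-interior}.
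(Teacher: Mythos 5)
Your overall strategy --- bootstrapping the scaled smallness of $u$ into control of the magnetic field and the pressure, iterating scaled quantities over shrinking cylinders, and finishing with an $\epsilon$-regularity criterion --- is indeed the skeleton of the paper's proof, but two of your steps have genuine gaps. First, the claimed inequality $A_b(\theta r)\le C\theta^{\alpha}A_b(r)+C(\text{small})\,(A_b(r)+E(r))$ cannot be obtained by ``testing the local energy inequality for $b$ against a suitable cutoff.'' A cutoff energy estimate produces constants of order $((1-\theta)r)^{-2}$, never a decaying factor $\theta^{\alpha}$; the decay has to come from the linear structure of the $b$-equation. The paper gets it in Lemma \ref{lem3.2} by writing $b_t-\Delta b=-\nabla\cdot(u\otimes b-b\otimes u)$ on $Q^{+}_{\rho}$, extending the source to the full cylinder by the even/odd reflection compatible with \eqref{noslip-slip}, and splitting $b=\tilde{w}+h$, where the Duhamel part $\tilde{w}$ is controlled via Sobolev embedding by $\|u\|_{L^{p,q}}\|b\|_{L^{\alpha,\beta}}$ (hence by $G_{u,p,q}(\rho)\Psi^{1/2}(\rho)$, which is where the smallness of $u$ enters) and the caloric part $h$ enjoys the decay $\int_{Q_r}|\tilde h|^2\le C(r/\rho)^5\int_{Q_{\rho/2}}|\tilde h|^2$. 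Note also that Definition \ref{sws-3dnse} only provides a joint local energy inequality for $|u|^2+|b|^2$, not a separate one for $b$, so even the starting point of your Gr\"onwall argument is not available as stated.

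Second, and more seriously for the boundary case, your iteration carries only $A_b$ and $E$ and never controls the pressure term $\frac{1}{r^2}\int_{Q^{+}_{z,r}}|u||\pi|$ in the local energy inequality, nor the scaled pressure quantity that the $\epsilon$-regularity criterion requires (the paper's Proposition \ref{ep-regularity} needs $M_u+M_b+\tilde{Q}<\epsilon^*$). Near a flat boundary the pressure cannot be recovered by a Riesz-transform formula as in the interior; the paper needs two dedicated lemmas --- Lemma \ref{lem4.2}, which splits $\pi=\pi_1+\pi_2$ through an auxiliary Stokes problem with force $-(u\cdot\nabla)u+(b\cdot\nabla)b$ and invokes Seregin's boundary local Stokes estimates to bound $Q_1$, and Lemma \ref{lem4.3}, which bounds $Q$ by $Q_1$ --- and must then run a \emph{coupled} iteration in $\Psi+\epsilon_1 Q_1+\epsilon_2 Q$ (inequality \eqref{K1K}), because the pressure can only be absorbed scale-by-scale, not made small at a single scale. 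This is exactly where the boundary restriction $2<q$ (equivalently $1<\lambda<2$ in \eqref{pq}) enters, and it is why the boundary range of $(p,q)$ is strictly smaller than the interior one; your explanation, attributing the only obstruction to the excluded endpoint $(3/2,\infty)$, misses this. Your final step --- quoting the criterion of \cite{Vya10} instead of proving Proposition \ref{ep-regularity} --- would be an acceptable shortcut once smallness of $\Psi$ at all small scales is established, but without the heat-equation decomposition for $b$ and the two boundary pressure lemmas, the iteration you propose never closes.
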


\begin{remark}
The result in Theorem \ref{main-thm-boundary-v} is also valid in the
interior. In fact, the range of $q$ in \eqref{b-con2} in the
interior is wider than that of boundary case. To be more precise,
the pair $(p,q)$ can be relaxed in the interior as follows:
\[
\frac{3}{p}+\frac{2}{q}\leq 2,\qquad  1\le q \le\infty,\quad
(p,q)\neq (\frac{3}{2}, \infty).
\]
As mentioned earlier, in \cite{WZ12} Wang and Zhang showed interior
regularity criteria depending only on the control of velocity fields
and we also obtain the same result independently. Since the method
of proof is a bit different to that in \cite{WZ12}, we present its
details in the Appendix for a variety of proof.
\end{remark}

Next corollaries are direct consequences of Theorem
\ref{main-thm-boundary-v}.

\begin{corollary}\label{Cor-KK1}
Let $(u,b,\pi)$ be a suitable weak solution of the MHD equations
\eqref{MHD} according to Definition \ref{sws-3dnse}. Suppose that
for some point $z=(x,t)\in\partial \R_{+}^3\times(0,T)$ $u$ is
locally in $L_{x,t}^{p,q}$ near $z$, where
$\frac{3}{p}+\frac{2}{q}=1$ with $3<p<\infty$. Then, $u$ and $b$ are
regular at $z$.
\end{corollary}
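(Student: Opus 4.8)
The plan is to read the corollary directly off Theorem \ref{main-thm-boundary-v} by feeding it the single pair $(p,q)$ supplied in the hypothesis. First I would verify that this pair is admissible for the theorem. From $\frac{3}{p}+\frac{2}{q}=1$ we get $\frac{3}{p}+\frac{2}{q}\le 2$ at once, and solving $\frac{2}{q}=1-\frac{3}{p}$ shows that $3<p<\infty$ forces $0<\frac{2}{q}<1$, i.e. $2<q<\infty$; moreover $p>3$ rules out $(p,q)=(\frac{3}{2},\infty)$. Hence all of the constraints $\frac{3}{p}+\frac{2}{q}\le 2$, $2<q\le\infty$, $(p,q)\neq(\frac{3}{2},\infty)$ are met, so the theorem provides a threshold $\epsilon=\epsilon(p,q)>0$ for which \eqref{b-con2} implies regularity at $z$.

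The key observation is that on the critical line $\frac{3}{p}+\frac{2}{q}=1$ the scaling factor degenerates, $r^{-(\frac{3}{p}+\frac{2}{q}-1)}=r^{0}=1$, so that \eqref{b-con2} collapses to the \emph{unscaled} requirement $\limsup_{r\to0}\norm{\norm{u}_{L^p(B^{+}_{x,r})}}_{L^q(t-r^2,t)}<\epsilon$. It therefore suffices to prove that this limit superior is in fact equal to $0$, and this is exactly the point at which local integrability of $u$ near $z$ is used.

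To make this precise I would fix $\rho>0$ with $u\in L^{p,q}_{x,t}(Q^{+}_{z,\rho})$ and put $g(s)=\bke{\int_{B^{+}_{x,\rho}}\abs{u(y,s)}^{p}\,dy}^{q/p}$, an integrable function on $(t-\rho^2,t)$. For $0<r<\rho$, enlarging the spatial ball gives
\[ \bke{\norm{\norm{u}_{L^p(B^{+}_{x,r})}}_{L^q(t-r^2,t)}}^{q}=\int_{t-r^2}^{t}\bke{\int_{B^{+}_{x,r}}\abs{u}^{p}\,dy}^{q/p}ds\le\int_{t-r^2}^{t}g(s)\,ds, \]
and the right-hand side tends to $0$ as $r\to0$ by absolute continuity of the Lebesgue integral over the shrinking time window $(t-r^2,t)$. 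Consequently the unscaled norm converges to $0$, so the limit superior in \eqref{b-con2} equals $0<\epsilon(p,q)$; thus \eqref{b-con2} holds for this pair and Theorem \ref{main-thm-boundary-v} yields regularity of $u$ and $b$ at $z$.

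The argument is essentially routine, and I expect no genuine obstacle, the full substance being carried by Theorem \ref{main-thm-boundary-v}. The only step requiring a little care is the last estimate: since \eqref{b-con2} is a mixed space--time norm, I would not shrink the spatial domain but instead bound it from above by the fixed ball $B^{+}_{x,\rho}$, thereby reducing the vanishing of the whole norm to a one-variable dominated-convergence statement in the time variable alone.
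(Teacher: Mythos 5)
Your proposal is correct and is in substance the same as the paper's: the paper disposes of this corollary in one line ("straightforward by H\"older's inequality"), i.e.\ it also reduces everything to Theorem \ref{main-thm-boundary-v}, and the only content in either version is checking that an admissible pair satisfies \eqref{b-con2}, which ultimately rests on the absolute continuity of the integral exactly as you argue. The only cosmetic difference is routing: the paper's H\"older remark lowers the exponents to a critical pair with $\frac{3}{p'}+\frac{2}{q'}=2$ before invoking the theorem, whereas you apply the theorem directly at the given supercritical pair (where the scaling factor degenerates to $r^{0}$), which is legitimate since the theorem's hypothesis already allows $\frac{3}{p}+\frac{2}{q}\le 2$, and indeed the theorem's own proof performs that same H\"older reduction internally.
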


It is straightforward to prove Corollary \ref{Cor-KK1} by H\"older's
inequality, and thus we skip its details (compare to \cite{MNS07}
for local interior case). Next corollary is due to
Poincar\'e-Sobolev inequality and the details is again omitted.

\begin{corollary}\label{cor-boundary-v}
The same statement of Theorem \ref{main-thm-boundary-v} remains true
if the hypothesis including condition \eqref{b-con2} is replaced by
the following:\,\, Suppose that for every pair $p,q$ satisfying
$2\leq \frac{3}{p}+\frac{2}{q}\leq 3$, $2< q \le\infty$, and
$(p,q)\neq (1, \infty)$, there exists $\epsilon>0$ depending only on
$p,q$ such that for some point $z=(x,t)\in\partial
\R_{+}^3\times(0,T)$ $u$ is locally in $L_{x,t}^{p,q}$ near $z$ and
\begin{equation}\label{b-con44}
\limsup_{r\rightarrow 0}r^{-(\frac{3}{p}+\frac{2}{q}-1)}
\norm{\norm{\nabla u}_{L^p(B^{+}_{x,r})}}_{L^q(t-r^2,t)}<\epsilon.
\end{equation}
\end{corollary}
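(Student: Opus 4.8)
The plan is to reduce the hypothesis on $\nabla u$ back to the velocity hypothesis \eqref{b-con2} already handled in Theorem \ref{main-thm-boundary-v}, using a scale-invariant Poincar\'e--Sobolev inequality that exploits the no-slip condition $u=0$ on $\partial\R^3_+$ from \eqref{noslip-slip}. Fix an admissible pair $(p,q)$ with $2\le \frac{3}{p}+\frac{2}{q}\le 3$, $2<q\le\infty$ and $(p,q)\neq(1,\infty)$. These constraints force $1<p<3$ (from $\frac{2}{q}<1$ one gets $\frac{3}{p}>1$, i.e. $p<3$, while $\frac{2}{q}>0$ gives $\frac{3}{p}<3$, i.e. $p>1$), so the Sobolev conjugate $p^{*}$ defined by $\frac{1}{p^{*}}=\frac{1}{p}-\frac{1}{3}$ is finite and lies in $(\frac{3}{2},\infty)$.

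First I would establish, for a.e. time $s\in(t-r^2,t)$, the scale-invariant estimate
\[
\norm{u(\cdot,s)}_{L^{p^{*}}(B^{+}_{x,r})}\le C\,\norm{\nabla u(\cdot,s)}_{L^{p}(B^{+}_{x,r})},
\]
with $C=C(p)$ independent of $r$ and of the base point $x\in\partial\R^3_+$. The point is that $u$ vanishes on the flat part $\{y_3=0\}\cap B_{x,r}$ of the boundary; hence the odd reflection of $u$ across $\{y_3=0\}$ lies in $W^{1,p}(B_{x,r})$ and, by symmetry of $B_{x,r}$ about the boundary, has zero mean over $B_{x,r}$. Applying the mean-zero Sobolev--Poincar\'e inequality on $B_{x,r}$---whose constant is scale invariant because both sides carry the common homogeneity $r^{3/p-1}=r^{3/p^{*}}$---and restricting back to the upper half-ball yields the displayed inequality. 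In particular, integrating in time and combining with the hypothesis on $\nabla u$ also shows that $u$ is locally in $L^{p^{*},q}_{x,t}$ near $z$, as required by Theorem \ref{main-thm-boundary-v}.

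Next I would take the $L^q(t-r^2,t)$ norm in time and rescale. Since $\frac{3}{p^{*}}+\frac{2}{q}-1=\frac{3}{p}+\frac{2}{q}-2$, the estimate above gives
\[
r^{-(\frac{3}{p^{*}}+\frac{2}{q}-1)}\norm{\norm{u}_{L^{p^{*}}(B^{+}_{x,r})}}_{L^q(t-r^2,t)}\le C\,r\cdot r^{-(\frac{3}{p}+\frac{2}{q}-1)}\norm{\norm{\nabla u}_{L^{p}(B^{+}_{x,r})}}_{L^q(t-r^2,t)}.
\]
The pair $(p^{*},q)$ satisfies $\frac{3}{p^{*}}+\frac{2}{q}=\frac{3}{p}+\frac{2}{q}-1\in[1,2]$, $2<q\le\infty$ and $(p^{*},q)\neq(\frac{3}{2},\infty)$, the last exclusion corresponding exactly to the excluded corner $(p,q)=(1,\infty)$; thus $(p^{*},q)$ is admissible in Theorem \ref{main-thm-boundary-v}. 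Taking $\limsup_{r\to 0}$ and using \eqref{b-con44} to keep the bracketed factor bounded, the extra power $r\to 0$ forces the left-hand side to $0$. In particular the smallness condition \eqref{b-con2} holds for $u$ at the pair $(p^{*},q)$, and Theorem \ref{main-thm-boundary-v} then gives that $u$ and $b$ are regular at $z$.

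The only delicate point I expect is the Poincar\'e--Sobolev step: one must verify that $u$ has vanishing trace on the flat boundary in the sense needed for the odd reflection to belong to $W^{1,p}(B_{x,r})$, and that the resulting Sobolev--Poincar\'e constant is genuinely independent of $r$, so that no spurious negative power of $r$ creeps in. Once this scale-invariant inequality is secured, the exponent bookkeeping and the matching of the excluded endpoints are routine, which is why the statement defers the details.
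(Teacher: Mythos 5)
Your proposal is correct and is essentially the paper's own argument: the paper dismisses this corollary in one line (``Next corollary is due to Poincar\'e--Sobolev inequality and the details is again omitted''), which is precisely your reduction of \eqref{b-con44} to \eqref{b-con2} for the Sobolev-conjugate pair $(p^{*},q)$ with $\tfrac{1}{p^{*}}=\tfrac{1}{p}-\tfrac{1}{3}$, followed by an application of Theorem \ref{main-thm-boundary-v}. Your write-up merely supplies the details the paper omits --- the odd reflection across the flat boundary justifying a scale-invariant Poincar\'e--Sobolev inequality, the exponent bookkeeping $\tfrac{3}{p^{*}}+\tfrac{2}{q}=\tfrac{3}{p}+\tfrac{2}{q}-1$, and the matching of the excluded corner $(p,q)=(1,\infty)$ with $(p^{*},q)=(\tfrac{3}{2},\infty)$.
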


Considering scaling invariant quantities of vorticity, we can also
establish other regularity criteria for vorticity near boundary.

\begin{theorem}\label{main-thm-boundary-w}
Let $(u,b,\pi)$ be a suitable weak solution of the MHD equations
\eqref{MHD} according to Definition \ref{sws-3dnse}. Suppose that
for every pair $p,q$ satisfying $2\leq \frac{3}{p}+\frac{2}{q}\leq
3$, $2< q \le\infty$, and $(p,q)\neq (1, \infty)$, there exists
$\epsilon>0$ depending only on $p,q$ such that for some point
$z=(x,t)\in\partial \R_{+}^3\times(0,T)$ $\omega=\nabla\times u$ is
locally in $L_{x,t}^{p,q}$ near $z$ and
\begin{equation}\label{b-con3}
\limsup_{r\rightarrow 0}r^{-(\frac{3}{\tilde{p}}+\frac{2}{q}-2)}
\norm{\norm{\omega}_{L^{p}(B^{+}_{x,r})}}_{L^{q}(t-r^2,t)}<\epsilon.
\end{equation}
Then, $u$ and $b$ are regular at $z$.
\end{theorem}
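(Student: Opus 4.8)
The plan is to reduce Theorem~\ref{main-thm-boundary-w} to the velocity criterion of Theorem~\ref{main-thm-boundary-v} by reconstructing $u$ from its vorticity. The algebraic core is the identity $-\Delta u=\nabla\times\omega$, which holds because $\nabla\cdot u=0$, and exhibits $u$ as a Riesz potential of order one of $\omega$ through the Biot--Savart law $u=\nabla\times(-\Delta)^{-1}\omega$. On the whole space this yields the Hardy--Littlewood--Sobolev bound $\|u\|_{L^{p^{*}}}\lesssim\|\omega\|_{L^{p}}$ with $1/p^{*}=1/p-1/3$; in the admissible range $2\le 3/p+2/q\le 3$, $2<q\le\infty$ one checks $1<p<3$, so $p^{*}$ is finite and the embedding applies. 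Since $3/p^{*}+2/q=3/p+2/q-1$, the hypothesis on $\omega$ translates into $1\le 3/p^{*}+2/q\le 2$ for $u$, precisely the range allowed in Theorem~\ref{main-thm-boundary-v}; moreover the scale-invariant weight for $u$ at exponent $p^{*}$ is $r^{-(3/p^{*}+2/q-1)}=r^{-(3/p+2/q-2)}$, which is exactly the factor appearing in \eqref{b-con3} (read with $\tilde p=p$). Thus the scaled vorticity norm in \eqref{b-con3} is tailored to produce the scaled velocity norm \eqref{b-con2} for the pair $(p^{*},q)$, and the excluded endpoint $(p,q)=(1,\infty)$ corresponds exactly to the excluded $(p^{*},q)=(\tfrac32,\infty)$ of Theorem~\ref{main-thm-boundary-v}.

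To localise the nonlocal Biot--Savart representation near the boundary, I would first extend $u$ across the flat boundary $\partial\R^{3}_{+}$ by reflecting its tangential components oddly and its normal component evenly in $x_{3}$; because $u=0$ on $\partial\R^{3}_{+}$ by \eqref{noslip-slip}, the extended field lies in $W^{1,p}$, remains divergence free, and its curl is the corresponding reflection of $\omega$. With a cutoff $\phi$ equal to one on $B^{+}_{x,r}$ and supported in $B^{+}_{x,2r}$, applying the whole-space representation to $\phi u$ produces a leading term, the Riesz potential of $\phi\,\omega$, controlled by Hardy--Littlewood--Sobolev so that its scaled $L^{p^{*},q}$ norm on $Q^{+}_{x,r}$ is dominated by the scaled vorticity norm of \eqref{b-con3} on $Q^{+}_{x,2r}$, together with commutator terms in which a derivative falls on $\phi$ and which involve only $u$ (not $\nabla u$) on the annulus $B^{+}_{x,2r}\setminus B^{+}_{x,r}$.

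The main obstacle is the treatment of these commutator terms, which by H\"older's inequality are comparable to the scaled velocity norm $X(r):=r^{-(3/p^{*}+2/q-1)}\|u\|_{L^{p^{*},q}(Q^{+}_{x,r})}$ evaluated at the larger scale $2r$; denoting by $W(r)$ the scaled vorticity norm in \eqref{b-con3}, the representation yields an inequality of the form $X(r)\le C_{1}W(2r)+C_{2}X(2r)$. Here $X(r)$ is finite for small $r$ because $\omega$ is assumed locally in $L^{p,q}$, whence $u\in L^{p^{*},q}$ locally by the same Sobolev bound, while $\limsup_{r\to0}W(r)<\epsilon$ by hypothesis. The delicate point, on which I would concentrate the technical effort, is to sharpen the singular-integral estimate on the thin shell so that the absorption constant satisfies $C_{2}<1$; granting this, a standard iteration of the inequality, started from the finiteness of $X$ at a fixed scale, gives $\limsup_{r\to0}X(r)\le C_{1}\epsilon/(1-C_{2})$, which can be made smaller than the threshold of Theorem~\ref{main-thm-boundary-v} by choosing $\epsilon$ small. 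That theorem, applied to the pair $(p^{*},q)$, then yields H\"older continuity of both $u$ and $b$ at $z$, the magnetic field being regularised simultaneously within Theorem~\ref{main-thm-boundary-v}. Alternatively one may argue at the level of $\nabla u$ via the Calder\'on--Zygmund bound $\|\nabla u\|_{L^{p}}\lesssim\|\omega\|_{L^{p}}$ and invoke Corollary~\ref{cor-boundary-v}, whose admissible range of $(p,q)$ coincides with that of the present theorem.
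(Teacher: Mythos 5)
Your reduction scheme is sound as far as it goes: the reflection extension across the flat boundary, the Hardy--Littlewood--Sobolev step, the exponent bookkeeping identifying $(p,q)\mapsto(p^{*},q)$ with the admissible range of Theorem~\ref{main-thm-boundary-v}, and the matching of excluded endpoints are all correct, and your ``alternative'' route --- controlling $\nabla u$ by $\omega$ and invoking Corollary~\ref{cor-boundary-v} --- is in fact the route the paper takes. The genuine gap sits exactly at the step you flag as delicate: the absorption constant. An inequality of the form $X(r)\le C_{1}W(2r)+C_{2}X(2r)$ at a \emph{fixed} scale ratio, with $C_{2}$ a constant coming from the singular-integral kernel on the shell, cannot be iterated, and no ``sharpening of the singular-integral estimate on the thin shell'' will make $C_{2}<1$: that constant is dictated by the kernel, and, worse, passing between the scaled norms at ratio $2$ multiplies it by $2^{3/p^{*}+2/q-1}>1$. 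There is no smallness to be extracted from kernel bounds alone, so the proof as proposed does not close.

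The missing idea --- and the heart of the paper's proof --- is that the remainder in the decomposition is \emph{harmonic} in the inner region, and harmonicity converts the absorption coefficient into a positive power of the scale ratio. The paper writes $u=v+h$ on $Q^{+}_{z,\rho}$, where $v$ solves $-\Delta v=\nabla\times(\omega\xi)$ in $\R^{3}_{+}$ with $v=0$ on $\{x_{3}=0\}$ (so $\norm{\nabla v}_{L^{\tilde p}_{x}(B^{+}_{\rho})}\le C\norm{\omega}_{L^{\tilde p}_{x}(B^{+}_{\rho})}$ by Calder\'on--Zygmund theory for the half space), while $h=u-v$ is harmonic in $B^{+}_{\rho/2}$ and vanishes on the flat part of the boundary; the mean value property then gives $\norm{\nabla h}_{L^{\tilde p}_{x}(B^{+}_{r})}\le C(r/\rho)^{3/\tilde p}\norm{\nabla h}_{L^{\tilde p}_{x}(B^{+}_{\rho/2})}$. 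This produces the two-scale inequality \eqref{boundary-vorticity-estimate1}, in which the coefficient of $D_{u,\tilde p,q}(\rho)$ is $C(r/\rho)^{3/\tilde p-1}$ rather than a fixed constant; since $q>2$ forces $3/\tilde p-1>1>0$, choosing the ratio $\theta=r/\rho$ small makes this coefficient at most $1/2$, after which iteration gives $\limsup_{r\to 0}D_{u,\tilde p,q}(r)\le C\theta^{-1}\epsilon$ and Corollary~\ref{cor-boundary-v} finishes the argument. Note that this fix is available inside your own framework: your commutator terms $\nabla\times(-\Delta)^{-1}(\nabla\phi\times\tilde u)$ and $\nabla(-\Delta)^{-1}(\nabla\phi\cdot\tilde u)$ are harmonic in the inner ball because their sources are supported in the shell, so comparing well-separated scales $r\ll\rho$ and using interior estimates for harmonic functions gains exactly the factor $(r/\rho)^{3/p^{*}}$ you need; but as written, with fixed ratio $2$ and only kernel bounds, the key step fails.
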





This paper is organized as follows. In Section 2 we introduce some
scaling invariant functionals and the notion of suitable weak
solutions. In Section 3 we present the proofs of Theorem
\ref{main-thm-boundary-v} and Theorem \ref{main-thm-boundary-w}. In
the Appendix, the interior case will be treated and give a detailed
a proof with respect to $\epsilon$-regularity criteria for the
modified suitable weak solution for MHD equations.



\section{Preliminaries}

In this section we introduce some scaling invariant functionals and
suitable weak solutions, and recall
an estimation of the Stokes system.

We first start with some notations.
Let $\Omega$ be an open domain in $\R^3$
\ and $I$ be a finite time interval. For $1\le q\le \infty$, we
denote the usual Sobolev spaces by $W^{k,q}(\Omega) = \{ u \in
L^{q}( \Omega )\,:\, D^{ \alpha }u \in L^{q}( \Omega ), 0 \leq |
\alpha | \leq k \}$. As usual, $W^{k,q}_0(\Omega)$ is the completion
of $\calC^{\infty}_0(\Omega)$ in the $W^{k,q}(\Omega)$ norm. We also
denote by $W^{-k,q'}(\Omega)$ the dual space of $W^{k,q}_0(\Omega)$,
where $q$ and $q'$ are H\"older conjugates. We write the average of
$f$ on $E$ as $\aint_{E} f$, that is $\aint_{E}f =\int_{E}
f/\abs{E}$. For a function $f(x,t)$, we denote
$\|f\|_{L^{p,q}_{x,t}(\Omega\times I)}=\|f\|_{L^{q}_{t}(I;
L^p_x(\Omega))}=\|\|f\|_{L^p_x(\Omega)}\|_{L^q_t(I)}$. For vector
fields $u,v$ we write $(u_iv_j)_{i,j=1,2,3}$ as $u\otimes v$. We
denote by $C=C(\alpha,\beta,...)$ a constant depending on the
prescribed quantities $\alpha,\beta,...$, which may change from line
to line.

In this paper, we consider the case that $\Omega=\R^3_+$, i.e. a
half space in dimension three. For convenience, we denote the
boundary of $\R^3_+$ by $\Gamma=\R^3\cap\{x_3=0\}$. Next, we
introduce scaling invariant quantities near boundary. Let $z=(x,t)
\in \Gamma \times I$ and we set
\[
A_{u}(r):=\sup_{t-r^{2}\leq
s<t}\frac{1}{r}\int_{B^{+}_{x,r}}|u(y,s)|^{2}dy,\quad
E_{u}(r):=\frac{1}{r} \int_{Q^{+}_{z,r}}|\nabla u(y,s)|^{2}dyds,
\]
\[
A_{b}(r):=\sup_{t-r^{2}\leq
s<t}\frac{1}{r}\int_{B^{+}_{x,r}}|b(y,s)|^{2}dy,\quad
E_{b}(r):=\frac{1}{r} \int_{Q^{+}_{z,r}}|\nabla b(y,s)|^{2}dyds,
\]
\[
M_{u}(r):=\frac{1}{r^2}\int_{Q^{+}_{z,r}}|u(y,s)|^{3}dyds,\quad
M_{b}(r):=\frac{1}{r^2}\int_{Q^{+}_{z,r}}|b(y,s)|^{3}dyds,\quad
\]
\[
K_{b}(r):=\frac{1}{r^3} \int_{Q^{+}_{z,r}}|b(y,s)|^{2}dyds,
\]
\[
(u)_r(s):= \aint_{B^{+}_{x,r}}u(\cdot,s) dy ,\quad (b)_r(s):=
\aint_{B^{+}_{x,r}}b(\cdot,s) dy, \quad
(\pi)_{r}(s)=\aint_{B^{+}_{x,r}}\pi(y,s)dy,
\]
\[
G_{u,p,q}(r):=r^{1-\frac{3}{p}-\frac{2}{q}}\norm{u(y,s)}_{L^{p,q}_{y,s}(Q^{+}_{z,r})},
\quad
D_{u,\tilde{p},q}(r):=r^{2-\frac{3}{\tilde{p}}-\frac{2}{q}}\norm{\nabla
u(y,s)}_{L^{\tilde{p},q}_{y,s}(Q^{+}_{z,r})},
\]
\[
V_{u,\tilde{p},q}(r):=r^{2-\frac{3}{\tilde{p}}-\frac{2}{q}}
\norm{\omega(y,s)}_{L^{\tilde{p},q}_{y,s}(Q^{+}_{z,r})},\qquad
\omega=\nabla\times u,
\]
where $1 \leq p, q \leq \infty$, $3/\tilde{p}+2/q=3$ and
$1/p=1/\tilde{p}-1/3$,
\begin{equation*}
\tilde{Q}(r):=\frac{1}{r}\biggl(\int^{t}_{t-r^{2}}\Bigl(\int_{B^{+}_{x,r}}|\pi(y,s)-(\pi)_{r}(s)|^{\kappa^{*}}dy
\Bigr)^{\frac{\lambda}{\kappa^{*}}}ds\biggr)^{\frac{1}{\lambda}},
\end{equation*}
\begin{equation*}
Q(r):=\frac{1}{r}\biggl(\int^{t}_{t-r^{2}}\Bigl(\int_{B^{+}_{x,r}}|\pi(y,s)|^{\kappa^{*}}dy
\Bigr)^{\frac{\lambda}{\kappa^{*}}}ds\biggr)^{\frac{1}{\lambda}},
\end{equation*}
\begin{equation*}
Q_{1}(r):=\frac{1}{r}\biggl(\int^{t}_{t-r^2}\Bigl(\int_{B^{+}_{x,r}}
|\nabla\pi(y,s)|^{\kappa}dy\Bigr)^{\frac{\lambda}{\kappa}}ds\biggr)^{\frac{1}{\lambda}},
\end{equation*}
where $\kappa, \kappa^{*}$ and $\lambda$ are numbers satisfying
\begin{equation}\label{pq}
\frac{3}{\kappa}+\frac{2}{\lambda}=4,\quad\frac{1}{\kappa^{*}}
=\frac{1}{\kappa}-\frac{1}{3},
\quad 1<\lambda<2.
\end{equation}
 Next we recall suitable weak solutions for
the MHD equations \eqref{MHD} in three dimensions.

\begin{definition}\label{sws-3dnse}
Let $\Omega=\R^3_+$ and $Q_T=\R^3_+\times [0,T)$. A triple of
$(u,b,\pi)$ is a suitable weak solution to \eqref{MHD} if the
following conditions are satisfied:
\begin{itemize}
\item[(a)] The functions $u,b : Q_T\rightarrow \mathbb{R}^3$ and $\pi :
Q_T \rightarrow \mathbb{R}$ satisfy
\begin{equation*}
u,b\in L^{\infty}\big(I;L^{2}(\Omega)\big)\cap
L^{2}\big(I;W^{1,2}(\Omega)\big),\quad \pi\in
L^{\lambda}\big(I;L^{\kappa^*}(\Omega)\big),
\end{equation*}
\begin{equation*}
\nabla^{2}u,\nabla^{2}b\in
L^{\lambda}\big(I;L^{k}(\Omega)\big),\quad \nabla \pi\in
L^{\lambda}\big(I;L^{\kappa}(\Omega)\big),
\end{equation*}
where $\kappa,\kappa^*$ and $\lambda$ are numbers in \eqref{pq}.

\item[(b)] ($u,b,\pi$) solves the MHD equations in $Q_T$ in
the sense of distributions and $u$ and $b$ satisfy the boundary
conditions \eqref{noslip-slip} in the sense of traces.
\item[(c)] $u,b$ and $\pi$ satisfy the local energy inequality
\begin{equation*}
\int_{B^{+}_{x,r}}(\abs{u(x,t)}^2+\abs{b(x,t)}^2) \phi(x,t)
dx
\end{equation*}
\begin{equation*}
+2\int_{t_0}^t\int_{B^{+}_{x,r}} (\abs{\nabla
u(x,t')}^{2}+\abs{\nabla  b(x,t')}^{2}) \phi(x,t') dx dt'
\end{equation*}
\begin{equation*}
\leq  \int_{t_0}^t\int _{B^{+}_{x,r}}(\abs{u}^{2}+\abs{b}^{2})
(\partial_t\phi+\Delta \phi)dxdt'+ \int_{t_0}^t\int
_{B^{+}_{x,r}}\bke{ |u|^2 +|b|^2+ 2\pi} u\cdot\nabla \phi dxdt'
\end{equation*}
\begin{equation}\label{local-energy}
-2\int_{t_0}^t\int _{B^{+}_{x,r}}(b \cdot u)(b
\cdot\nabla\phi)dxdt'.
\end{equation}
for all $t\in I=(0,T)$ and for all nonnegative function $\phi \in
C_0^{\infty}(\R^3\times R)$.
\end{itemize}
\end{definition}

We consider the following Stokes system, which is the linearized
Navier-Stokes equations:
\begin{equation}\label{stokes-eqn}
v_t-\Delta v+\nabla p=f, \quad  {\rm{div}} \,v=0 \qquad \mbox{in
}\,\,Q_T:=\Omega\times (0,T)
\end{equation}
with initial data $v(x,0)=v_0(x)$. As in \eqref{noslip-slip},
boundary condition of $v$ is assumed to be no-slip, namely
$v(x,t)=0$ for $x\in\partial\Omega$. We recall maximal estimates of
the Stokes system in terms of mixed norms (see e.g. \cite[Theorem
5.1]{G86}).
\begin{lemma}\label{lem1}
Let $1<l,m<\infty$. Suppose that $f\in L^{l,m}_{x,t}(Q_T)$ and
$v_0\in D_l^{1-\frac{1}{m},m}$, where  $D_l^{1-\frac{1}{m},m}$ is a
Banach space with the following norm :
\begin{equation*}
D_l^{1-\frac{1}{m},m}(\Omega):= \ \bket{w \in L_{\sigma}^{l}(\Omega)
;
\norm{w}_{D_l^{1-\frac{1}{m},m}}=\norm{w}_{L^{l}}+\bke{\int_0^{\infty}
\norm{t^{\frac{1}{m}} A_l e^{-t A_l}w}^m_{L^{l}}\frac{dt}{t}
}^{\frac{1}{m}}<\infty },
\end{equation*}
where $A_l$ is the Stokes operator(see \cite{GS91} for the details).
If $(v,p)$ is the solution of the Stokes system \eqref{stokes-eqn}
with no-slip boundary conditions, then the following estimate is
satisfied:
\[
\norm{v_t}_{L^{l,m}_{x,t}(Q_T)}+\norm{\nabla^2
v}_{L^{l,m}_{x,t}(Q_T)}+\norm{\nabla p}_{L^{l,m}_{x,t}(Q_T)}
\]
\begin{equation}\label{stokes-estimate}
\leq
C\norm{f}_{L^{l,m}_{x,t}(Q_T)}+\norm{v_0}_{D_l^{1-\frac{1}{m},m}(\Omega)}.
\end{equation}
\end{lemma}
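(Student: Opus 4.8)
The plan is to reduce the Stokes system \eqref{stokes-eqn} to an abstract parabolic evolution equation governed by the Stokes operator $A_l$, and then invoke the theory of maximal $L^m$-regularity for analytic semigroups. First I would apply the Helmholtz-Leray projection $P:L^l(\Omega)\rightarrow L^l_\sigma(\Omega)$, which is bounded for $1<l<\infty$, to the first equation in \eqref{stokes-eqn}. Since $Pv_t=v_t$ for solenoidal $v$, since $P\nabla p=0$, and since $-P\Delta v=A_l v$ by the definition of the Stokes operator, the projected system collapses to the abstract Cauchy problem
\begin{equation*}
v_t+A_l v=Pf,\qquad v(0)=v_0,
\end{equation*}
posed in $L^l_\sigma(\Omega)$. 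This step eliminates the pressure from the evolution and isolates the parabolic structure.

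Next I would use the fact that $-A_l$ generates a bounded analytic semigroup $e^{-tA_l}$ on $L^l_\sigma(\Omega)$ for $1<l<\infty$, so that the solution admits the Duhamel representation $v(t)=e^{-tA_l}v_0+\int_0^t e^{-(t-s)A_l}Pf(s)\,ds$. The core of the argument is the maximal $L^m$-regularity estimate
\begin{equation*}
\norm{v_t}_{L^{l,m}_{x,t}(Q_T)}+\norm{A_l v}_{L^{l,m}_{x,t}(Q_T)}\leq C\norm{Pf}_{L^{l,m}_{x,t}(Q_T)}+C\norm{v_0}_{(L^l_\sigma,\,D(A_l))_{1-\frac{1}{m},m}},
\end{equation*}
which holds because $L^l$ is a UMD space for $1<l<\infty$ and the Stokes operator possesses bounded imaginary powers, equivalently an $R$-bounded resolvent on a sector. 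The contribution of the inhomogeneity is then controlled by the Dore-Venni/Weis machinery, while the initial-data term is estimated through the trace characterization of the real interpolation space.

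The remaining point is to identify the abstract trace space with the space $D_l^{1-\frac{1}{m},m}$ appearing in the statement. By the standard semigroup characterization of real interpolation spaces, $w$ belongs to $(L^l_\sigma,D(A_l))_{1-\frac{1}{m},m}$ precisely when the seminorm $\bke{\int_0^\infty\norm{t^{1/m}A_le^{-tA_l}w}^m_{L^l}\frac{dt}{t}}^{1/m}$ is finite, which is exactly the definition given for $D_l^{1-\frac{1}{m},m}(\Omega)$. Finally I would recover the full left-hand side of \eqref{stokes-estimate}: the bound on $\nabla^2 v$ follows from the stationary Stokes estimate $\norm{\nabla^2 v}_{L^l}\leq C\norm{A_l v}_{L^l}$ for $v\in D(A_l)$, and the bound on $\nabla p$ follows by reading it off from the equation as $\nabla p=f-v_t+\Delta v$, each term of which has already been controlled in $L^{l,m}_{x,t}(Q_T)$. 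Absorbing $\norm{Pf}_{L^{l,m}_{x,t}}\leq C\norm{f}_{L^{l,m}_{x,t}}$ then completes the estimate.

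The main obstacle is the maximal $L^m$-regularity estimate for the Stokes operator on the half-space; everything else is essentially bookkeeping. Establishing it rigorously requires either verifying that $A_l$ has bounded imaginary powers with power angle strictly less than $\pi/2$, so that the Dore-Venni theorem applies, or proving $R$-boundedness of the resolvent family $\{\lambda(\lambda+A_l)^{-1}\}$ on a sector, so that Weis' theorem applies. For the half-space this can be carried out explicitly using the known structure of the Stokes resolvent, but it is precisely the technically demanding ingredient, which is why the statement is quoted from \cite[Theorem 5.1]{G86} rather than reproved here.
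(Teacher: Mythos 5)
The paper offers no proof of this lemma at all; it is recalled from \cite[Theorem 5.1]{G86} and \cite{GS91}, whose proof is precisely the operator-theoretic argument you outline: Helmholtz projection to the abstract Cauchy problem $v_t+A_lv=Pf$, maximal $L^m$-regularity for the Stokes semigroup via bounded imaginary powers and the Dore--Venni theorem, identification of the admissible initial data with the real interpolation space $(L^l_\sigma,D(A_l))_{1-\frac{1}{m},m}$ through the semigroup characterization, and recovery of $\nabla^2 v$ and $\nabla p$ from the graph-norm estimate and the equation. Your proposal is correct and follows essentially the same route as the cited source, including the appropriate deferral of the one genuinely hard ingredient (BIP, or equivalently for this purpose $R$-sectoriality, of $A_l$ with angle below $\pi/2$) to that same literature.
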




\section{Boundary regularity}

In this section, we prove a local regularity criterion for MHD
equations near the boundary and present the proofs of Theorem
\ref{main-thm-boundary-v} and \ref{main-thm-boundary-w}.
For simplicity, we write
$\Psi(r):=A_{u}(r)+A_{b}(r)+E_{u}(r)+E_{b}(r)$. Let $z=(x,
t)\in\Gamma\times I$ and from now on, without loss of generality, we
assume $x=0$ by translation. We first recall that the local energy
estimate.
\begin{equation}\label{local-energy-estimate}
\Psi(\frac{r}{2})\leq C\bigg(M^{\frac{2}{3}}_u(r)+K_b(r)+M_u(r)+
\frac{1}{r^2}\int_{Q^+_{z,r}}\abs{u}\abs{b}^2dz+\frac{1}{r^2}\int_{Q^+_{z,r}}\abs{u}\abs{\pi}dz\biggr).
\end{equation}

Next we prove a local regularity condition near boundary for MHD
equations (compare to \cite[Lemma 7]{GKT06} for the Navier-Stokes
equations).

\begin{proposition}\label{ep-regularity}
There exist $\epsilon^*>0$ and $r_0>0$ such that if $(u,b,\pi)$ is a
suitable weak solution of MHD equations satisfying Definition
\ref{sws-3dnse}, $z=(x,t) \in \Gamma \times I$, and
\begin{equation}
M_u(r)+M_b(r)+\tilde{Q}(r)<\epsilon^* \qquad \mbox{ for some }r\in
(0,r_0),
\end{equation}
then $z$ is regular point.
\end{proposition}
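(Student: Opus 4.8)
The plan is to prove this $\epsilon$-regularity proposition via an iteration scheme on the natural dyadic scales, establishing that the scaling-invariant quantity $\Psi(r)$ (together with the relevant pressure functional) decays geometrically once the hypothesis $M_u(r)+M_b(r)+\tilde Q(r)<\epsilon^*$ holds. This is the MHD boundary analogue of the classical Caffarelli--Kohn--Nirenberg / Gustafsson--Kinra--Tsai argument, so I would follow that template. The key structural point is that control of $M_u+M_b+\tilde Q$ at one scale must be shown to propagate to a decay estimate for the combined energy quantity $\Psi$, and ultimately to smallness of $M_u(\rho)+M_b(\rho)$ at all sufficiently small scales $\rho$, which by a known boundary $\epsilon$-criterion (or by bootstrapping through the Stokes estimate in Lemma \ref{lem1}) forces H\"older continuity at $z$.

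First I would fix the dyadic radii $r_j=\theta^j r$ for a parameter $\theta\in(0,1/2)$ to be chosen, and start from the local energy estimate \eqref{local-energy-estimate}, which bounds $\Psi(r/2)$ by $M_u^{2/3}(r)+K_b(r)+M_u(r)$ plus the two cross terms $\frac1{r^2}\int_{Q^+_{z,r}}|u||b|^2$ and $\frac1{r^2}\int_{Q^+_{z,r}}|u||\pi|$. The essential interpolation step is to control each of the right-hand quantities by $\Psi$ at a comparable scale together with the small data $M_u,M_b,\tilde Q$. Concretely, I would use the boundary Gagliardo--Nirenberg/Sobolev inequalities (respecting the no-slip condition $u=0$ on $\Gamma$, which kills boundary terms and allows the Poincar\'e-type control that distinguishes the boundary case from the interior) to estimate $M_u(\rho)$ and $M_b(\rho)$ in terms of $A_u,A_b,E_u,E_b$ at scale $r$, picking up the customary $(\rho/r)^3$ gain from the cubic scaling plus a remainder governed by $(r/\rho)$-powers times $\Psi(r)$. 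The cross terms are handled by H\"older: $\int|u||b|^2\le \|u\|\,\|b\|^2$ distributed so that one factor carries the smallness while the others are absorbed into $\Psi$, and $\int|u||\pi|$ is split using the pressure functional $\tilde Q$ after subtracting the mean $(\pi)_r$, which is legitimate since $u$ is divergence-free and vanishes on $\Gamma$.

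The core of the argument is then the iteration inequality. Combining the above one obtains something of the schematic form
\begin{equation*}
\Phi(\theta r)\le C\,\theta^{\alpha}\,\Phi(r)+C\,\theta^{-\beta}\,\big(M_u(r)+M_b(r)+\tilde Q(r)\big)^{\gamma},
\end{equation*}
where $\Phi$ is a suitable sum of $\Psi$ and a pressure quantity and $\alpha,\beta,\gamma>0$ are fixed exponents coming from the scalings. Choosing $\theta$ small enough that $C\theta^{\alpha}<\tfrac12$ and then $\epsilon^*$ small enough (with $r_0$ controlling the low-order remainders) gives $\Phi(\theta r)\le \tfrac12\Phi(r)+(\text{small})$, and standard iteration yields $\Phi(\theta^j r)\le \theta^{\sigma j}\Phi(r)+C\epsilon^{*\gamma}$ for some $\sigma>0$. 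In particular $M_u(\rho)+M_b(\rho)$ becomes as small as we wish on all small scales, which places us in the regime of an already-available boundary regularity criterion for the MHD system, delivering H\"older continuity and hence regularity of $z$.

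The pressure is where I expect the main obstacle to lie. Unlike the interior case, near the flat boundary $\Gamma$ one cannot freely use the full-space representation of $\pi$ via the Riesz transforms, because the no-slip boundary condition forces the use of the Stokes system rather than the heat kernel. I would therefore estimate the pressure through a boundary decomposition: solve the associated Stokes/Neumann problem locally and split $\pi$ into a part controlled by $\tilde Q$ at the ambient scale $r$ and a harmonic-type remainder whose interior regularity yields the favorable $(\rho/r)$ power needed to close the iteration. Ensuring that the exponents $\kappa,\kappa^*,\lambda$ in \eqref{pq} are exactly the ones that make both the local energy inequality and the Stokes maximal estimate \eqref{stokes-estimate} applicable — and that the mean-value subtraction interacts correctly with $u=0$ on the boundary — is the delicate part that must be carried out carefully; everything else is the standard dimensional bookkeeping of scaling-invariant quantities.
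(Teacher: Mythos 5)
There is a genuine gap at the decisive step. Your iteration scheme --- local energy inequality \eqref{local-energy-estimate}, interpolation, boundary pressure decomposition --- can at best deliver that the scale-invariant quantities ($\Psi$, $M_u+M_b$, the pressure functional) remain \emph{small} on all dyadic scales; indeed your own schematic conclusion $\Phi(\theta^j r)\le \theta^{\sigma j}\Phi(r)+C\epsilon^{*\gamma}$ carries a persistent additive term, so you obtain uniform smallness, not decay to zero, and certainly not decay of mean-oscillation (Campanato-type) quantities. Smallness of scale-invariant norms at all scales does not by itself imply H\"older continuity; converting it into continuity is precisely the content of Proposition \ref{ep-regularity}. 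Your closing appeal to ``a known boundary $\epsilon$-criterion'' is therefore circular --- the criterion you would be invoking \emph{is} this proposition --- and the alternative ``bootstrapping through the Stokes estimate in Lemma \ref{lem1}'' does not work as stated: Lemma \ref{lem1} is a maximal regularity estimate for the linear Stokes system and cannot by itself upgrade $L^3$-smallness of a solution of the nonlinear system to H\"older continuity. If what you have in mind is the Vyalov--Shilkin criterion \cite{Vya10} (smallness of the scaled Dirichlet integral of $u$ together with boundedness of that of $b$), then the proof becomes a reduction to that theorem and this must be made explicit, including the passage from your $\Phi$ to their hypotheses; as written, the core regularity mechanism is simply absent from the proposal.

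The paper supplies this mechanism through Lemma \ref{mhd-decay}, proved in the Appendix by a blow-up/compactness argument rather than by direct iteration. Assuming the one-step decay estimate fails along a sequence, one rescales $(u,b,\pi)$ by $\epsilon_n=M_u^{1/3}(r_n)+M_b^{1/3}(r_n)+\tilde{Q}(r_n)\searrow 0$, so that the rescaled triples solve an MHD system whose nonlinear terms carry the vanishing factor $\epsilon_n$; weak limits then solve the \emph{linear} Stokes system and heat equation near the flat boundary, whose solutions are H\"older continuous by the linear theory of \cite{SSS06}. Strong $L^3$ convergence transfers the resulting $\theta^{1+\alpha}$ decay of the scaled $L^3$ means of $u$ and $b$ (meaningful without mean subtraction since $u$ vanishes on $\Gamma$) and of the mean-subtracted pressure back to the sequence, contradicting the assumed failure. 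Iterating the decay lemma then yields Campanato-type decay and hence H\"older continuity at $z$. It is this use of the linear theory inside a contradiction/compactness argument --- not the scaled-norm bookkeeping, which both you and the paper carry out in similar ways --- that actually proves the proposition, and it is exactly what your proposal lacks.
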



The proof of Proposition \ref{ep-regularity} is based on the
following lemma, which shows a decay property of $(u,b,\pi)$ in a
Lebesgue spaces. Although the method of proof is in principle the
similar as in \cite[Lemma 7, Lemma 8]{GKT06}, we present its details
for clarity (the proof will be given in Appendix).

\begin{lemma}\label{mhd-decay}
Let $0<\theta<\frac{1}{2}$. There exist $\epsilon_1>0$ and $r_*$
depending on $\lambda$ and $\theta$ such that if $(u,b,\pi)$ is a
suitable weak solution of the MHD equations satisfying Definition
\ref{sws-3dnse}, $z=(x,t)\in\Gamma\times(0,T)$, and
$M_{u}^{\frac{1}{3}}(r)+M_{b}^{\frac{1}{3}}(r)+\tilde{Q}(r)<\epsilon_1$
for some $r\in (0,r_*)$, then
\[ M_{u}^{\frac{1}{3}}(\theta
r)+M_{b}^{\frac{1}{3}}(\theta r)+\tilde{Q}(\theta r)<
C\theta^{1+\alpha}\bke{M_{u}^{\frac{1}{3}}(r)+M_{b}^{\frac{1}{3}}(r)
+\tilde{Q}(r)},
\]
where $0<\alpha<1$ and $C>0$ are constants.
\end{lemma}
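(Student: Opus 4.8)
The plan is to follow the comparison scheme of \cite{GKT06}: near $z$ we split $(u,b,\pi)$ into a part governed by the linear Stokes and heat problems, whose local quantities decay by a fixed superlinear power of $\theta$ because solutions of the homogeneous problems are smooth up to the flat boundary, and a nonlinear remainder that is quadratic in $M_u^{\frac13}+M_b^{\frac13}+\tilde{Q}$ and is therefore absorbed by the smallness hypothesis. Since $M_u$, $M_b$ and $\tilde{Q}$ are invariant under the parabolic scaling $(y,s)\mapsto(x+\rho y,\,t+\rho^2 s)$, $(u,b,\pi)\mapsto(\rho u,\rho b,\rho^2\pi)$, I would first rescale so that $r=1$ and reduce the assertion to a single decay step from scale $1$ to scale $\theta$, fixing $\epsilon_1$ and $r_*$ at the end.

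Write $X:=M_u^{\frac13}(1)+M_b^{\frac13}(1)+\tilde{Q}(1)$. On $Q^+_{z,1}$ I would set $u=v+w$, where $v$ solves the Stokes system \eqref{stokes-eqn} with zero initial data, no-slip data on $\Gamma$, and forcing built from the nonlinear terms $-(u\cdot\nabla)u+(b\cdot\nabla)b=-\nabla\cdot(u\otimes u-b\otimes b)$ (rewritten in divergence form via $\mathrm{div}\,u=\mathrm{div}\,b=0$), while $w=u-v$ solves the homogeneous Stokes system. By the maximal regularity estimate of Lemma \ref{lem1}, together with H\"older's inequality, the local energy estimate \eqref{local-energy-estimate} (to control $\nabla u,\nabla b$), and the relations \eqref{pq} among $\kappa,\kappa^*,\lambda$, the part $v$ and its pressure $p_v$ are controlled in the norms entering $M_u(\theta)$ and $\tilde{Q}(\theta)$ by $\lesssim M_u^{\frac23}(1)+M_b^{\frac23}(1)\lesssim X^2$. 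The homogeneous part $w$ is smooth up to $\Gamma$; since $w$ vanishes on $\Gamma$ (no-slip) and $z\in\Gamma$, a Taylor expansion at $z$ gives $M_w^{\frac13}(\theta)\le C\theta^{1+\alpha}M_w^{\frac13}(1)$, and the associated pressure is harmonic in space, so its oscillation obeys an even stronger decay, yielding the $\tilde{Q}$-contribution $\lesssim\theta^{1+\alpha}\tilde{Q}(1)$.

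The magnetic field is treated in parallel. Using the divergence-free conditions, $b$ solves the heat system $b_t-\Delta b=-\nabla\cdot(u\otimes b-b\otimes u)$ under the slip conditions \eqref{noslip-slip}, which on the flat boundary become a Dirichlet condition for the normal component and Neumann conditions for the tangential components. I would split $b=v_b+w_b$, with $v_b$ solving the inhomogeneous heat system (zero data, bilinear forcing $-\nabla\cdot(u\otimes b-b\otimes u)$) and $w_b$ the homogeneous solution. Parabolic maximal regularity bounds $v_b$ in the relevant norms by $\lesssim M_u^{\frac13}(1)M_b^{\frac13}(1)\lesssim X^2$, while $w_b$, extended by even/odd reflection across $\Gamma$ to a caloric field on the full cylinder, is smooth and contributes the decay $M_{w_b}^{\frac13}(\theta)\le C\theta^{1+\alpha}M_{w_b}^{\frac13}(1)$ once its non-vanishing tangential trace at $z$ is accounted for. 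Collecting the three estimates gives $M_u^{\frac13}(\theta)+M_b^{\frac13}(\theta)+\tilde{Q}(\theta)\le C\theta^{1+\alpha}X+C(\theta)X^2$, and choosing $\epsilon_1=\epsilon_1(\theta)$ so small that $C(\theta)X\le C(\theta)\epsilon_1\le C\theta^{1+\alpha}$ absorbs the quadratic term; undoing the rescaling yields the claim.

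I expect the main obstacle to be the boundary analysis underlying the two linear decay estimates: obtaining up-to-the-boundary maximal regularity and Campanato-type decay for the Stokes system under the no-slip condition and, especially, for the magnetic heat system under the mismatched slip conditions, where the tangential trace of $w_b$ at the boundary point $z$ does not vanish and must be handled via the reflected caloric extension in order to extract the superlinear factor $\theta^{1+\alpha}$. A secondary but essential point is the exponent bookkeeping: one must verify that the nonlinear forcings $\nabla\cdot(u\otimes u-b\otimes b)$ and $\nabla\cdot(u\otimes b-b\otimes u)$ land in exactly the mixed-norm spaces $L^{l,m}_{x,t}$ for which Lemma \ref{lem1} applies, which is precisely what the choice \eqref{pq} of $\kappa,\kappa^*,\lambda$ is designed to guarantee.
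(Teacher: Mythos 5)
Your overall strategy is genuinely different from the paper's. You propose a direct perturbation argument: split off the nonlinear forcing, bound its contribution quadratically, let the homogeneous linear problems supply the decay, and absorb the quadratic term by smallness. The paper instead proves Lemma \ref{mhd-decay} by contradiction, via a blow-up/compactness argument in the spirit of \cite[Lemma 8]{GKT06}: a sequence violating the conclusion is rescaled and normalized so that the nonlinear terms carry a factor $\epsilon_n\to 0$, the weak limits solve the \emph{linear} Stokes and heat systems, boundary regularity for those systems (\cite{SSS06}) together with strong $L^3$ convergence yields the decay \eqref{scaling-2000}, and only the pressure is handled by a decomposition of the kind you describe. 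Either architecture could in principle work, and yours has the merit of being quantitative; but it forces you to prove, with explicit constants, exactly the linear decay estimates that the compactness argument extracts softly from the limit.

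That is where your proof has a genuine gap: the decay claim for the homogeneous caloric part $w_b$ of the magnetic field is false as stated. The boundary conditions \eqref{noslip-slip} force only the normal component of $b$ (hence of $w_b$) to vanish on $\Gamma$; the tangential components satisfy a Neumann condition, so in general $w_b(z)\neq 0$. For instance $w_b\equiv(1,0,0)$ is caloric, satisfies $w_b\cdot\nu=0$ and $(\nabla\times w_b)\times\nu=0$, and has $M_{w_b}^{1/3}(\theta)=c\,\theta$ exactly --- linear decay, not $\theta^{1+\alpha}$. Smoothness of the even/odd reflected extension cannot repair this: the obstruction is the nonzero value of $w_b$ at $z$ itself, not any lack of regularity, since $\frac{1}{\theta^2}\int_{Q^{+}_{\theta}}|w_b|^3\gtrsim \theta^3|w_b(z)|^3$ no matter how smooth $w_b$ is. With only $M_{w_b}^{1/3}(\theta)\le C\theta\,X$ available, your final inequality degrades to $C\theta X + C(\theta)X^2$, and no choice of $\epsilon_1$ recovers the superlinear factor $\theta^{1+\alpha}$; you flag this as ``the main obstacle,'' but the reflected caloric extension you propose does not actually produce it. A secondary, fixable point: your ``Taylor expansion'' for the Stokes part presumes Lipschitz bounds up to the boundary, which are known to fail for the nonstationary Stokes system with no-slip data (cf. \cite{K04}); one should instead use H\"older continuity up to the flat boundary as in \cite{SSS06}, which suffices because $w$ vanishes on $\Gamma$. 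Note finally that the magnetic step is precisely the point the paper itself treats tersely, asserting the bound \eqref{scaling-2000} for the limiting linear system; your quantitative scheme cannot take that step for granted, so as written the proof does not close.
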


Next lemma is estimates of the scaled integral of cubic term of $u$
and multiple of $u$ and square of $b$.

\begin{lemma}\label{estimate-ub}
Let $z=(x, t)\in\Gamma\times I$. Suppose that $u\in
L^{p,q}_{x,t}(Q^+_{z,r})$ with $3/p+2/q=2$, $3/2 \leq p \leq
\infty$. Then for $0<r<\rho/4$,
\begin{equation}\label{estimate-Mu}
M_{u}(r)\leq CG_{u,p,q}(r)\Psi(r) \leq
C\bigg(\frac{\rho}{r}\bigg)\Psi(\rho)G_{u,p,q}(r) ,
\end{equation}
\begin{equation}\label{estimate-Mb}
\frac{1}{r^2}\int_{Q^+_{z,r}}\abs{u}\abs{b}^2dz\leq
CG_{u,p,q}(r)\Psi(r) \leq
C\bigg(\frac{\rho}{r}\bigg)\Psi(\rho)G_{u,p,q}(r).
\end{equation}
\end{lemma}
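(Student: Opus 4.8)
The plan is to prove both estimates by the same interpolation scheme, which is the boundary analogue of the argument in \cite[Lemma 7]{GKT06}. I will treat \eqref{estimate-Mu} in detail; the bound \eqref{estimate-Mb} is identical with one factor of $u$ replaced by $|b|^2$. Since $3/p+2/q=2$ forces the scaling exponent $1-3/p-2/q=-1$, every quantity below is parabolically scale invariant, so the task is really to confirm that all powers of $r$ collapse onto the single factor $r^2$ sitting in $M_u$.

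First I would split the cubic integrand pointwise as $|u|^3=|u|\cdot|u|^2$ and apply H\"older in space on each time slice over $B^{+}_{x,r}$ with exponents $p$ and $p'=p/(p-1)$, giving $\int_{B^{+}_{x,r}}|u|^3\,dy\le \norm{u}_{L^p(B^{+}_{x,r})}\norm{u}_{L^{2p'}(B^{+}_{x,r})}^2$. The factor $\norm{u}_{L^p}$ is exactly the spatial part of the controlled norm $G_{u,p,q}$, while $\norm{u}_{L^{2p'}}^2$ must be absorbed into the energy $\Psi$.

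The heart of the argument is the Gagliardo--Nirenberg--Sobolev interpolation on the half-ball, $\norm{u}_{L^{2p'}(B^{+}_{x,r})}\le C\norm{\nabla u}_{L^2(B^{+}_{x,r})}^{\theta}\norm{u}_{L^2(B^{+}_{x,r})}^{1-\theta}+Cr^{-(1-\frac1q)}\norm{u}_{L^2(B^{+}_{x,r})}$, where a direct computation from $1/p'=1/3+2/(3q)$ gives $\theta=1-\tfrac1q$. The decisive coincidence is that this value of $\theta$ is precisely what makes the time integration close: inserting the interpolation and integrating over $(t-r^2,t)$, I would apply H\"older in time with exponents $q$ and $q'$ to the product $\norm{u}_{L^p}\,\norm{\nabla u}_{L^2}^{2-2/q}$, and the identity $(2-\tfrac2q)q'=2$ turns the gradient factor into exactly $\norm{\nabla u}_{L^2(Q^{+}_{z,r})}^{2-2/q}$, a power of the energy $rE_u(r)$, while $\sup_s\norm{u}_{L^2}^{2/q}=(rA_u(r))^{1/q}$ and $\norm{u}_{L^{p,q}(Q^{+}_{z,r})}=rG_{u,p,q}(r)$. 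Collecting powers of $r$ yields $r^{1/q}\cdot r\cdot r^{1-1/q}=r^2$, so that $M_u(r)\le C A_u(r)^{1/q}E_u(r)^{1-1/q}G_{u,p,q}(r)$; a weighted Young inequality then gives $A_u^{1/q}E_u^{1-1/q}\le A_u+E_u\le\Psi(r)$, which is the first inequality. The lower-order term from the half-ball Sobolev inequality is handled the same way and also produces a factor $A_u(r)G_{u,p,q}(r)\le\Psi(r)G_{u,p,q}(r)$.

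For the second inequality in each display I would use the elementary monotonicity $A_u(r)\le(\rho/r)A_u(\rho)$ and $E_u(r)\le(\rho/r)E_u(\rho)$, immediate from $B^{+}_{x,r}\subset B^{+}_{x,\rho}$, the shorter time interval, and the definitions, whence $\Psi(r)\le(\rho/r)\Psi(\rho)$ for $r<\rho$. The estimate \eqref{estimate-Mb} follows verbatim: H\"older in space gives $\int_{B^{+}_{x,r}}|u||b|^2\le\norm{u}_{L^p}\norm{b}_{L^{2p'}}^2$, the same interpolation applies to $b$, and $A_b,E_b$ replace $A_u,E_u$ throughout. The main obstacle, and the only place real care is needed, is verifying the two exponent identities $\theta=1-\tfrac1q$ and $(2-\tfrac2q)q'=2$ simultaneously and checking that the boundary correction term does not disturb the scaling; once these are in place the rest is bookkeeping. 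I would also double-check the endpoints $q=\infty$ (where the $A_u$ factor disappears and only $E_u$ survives) and $q=1$ (where $\nabla u$ drops out and only $A_u$ survives) to confirm that the constant $C$ stays uniform across the range $3/2\le p\le\infty$.
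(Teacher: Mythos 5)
Your proof is correct and follows essentially the same route as the paper's: H\"older's inequality splitting off the $L^{p,q}$ norm of $u$, interpolation of the remaining quadratic factor between $L^2_x$ and $L^6_x$ (Sobolev/Poincar\'e on the half-ball, with the same exponent $\theta=1-1/q$, which equals the paper's $1-\alpha$ with $\alpha=(3-p^*)/(2p^*)$), the time-exponent identity $(2-2/q)q'=2$, Young's inequality to reach $\Psi(r)$, and the monotonicity $\Psi(r)\le C(\rho/r)\Psi(\rho)$. The only cosmetic difference is that you apply H\"older slice-wise in space and then in time, while the paper uses the mixed-norm H\"older inequality with conjugate pair $(p^*,q^*)$ at once; the exponents and estimates are identical.
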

\begin{proof}
It is sufficient to show estimate \eqref{estimate-Mb} because
\eqref{estimate-Mu} can be proved in the same way as
\eqref{estimate-Mb}. We note first that via H\"older's inequality
\begin{equation}\label{proof-est-mb}
\frac{1}{r^2}\int _{Q^+_{z,r}}\abs{u}\abs{b}^{2}dxds\leq
\frac{1}{r}\norm{u}_{L^{p,q}_{x,t}(Q^+_{z,r})}\frac{1}{r}\norm{b}^2_{L^{2p^{*},2q^{*}}_{x,t}(Q^+_{z,r})},
\end{equation}
where $p^*$ and $q^*$ are H\"older conjugates of $p$ and $q$. For
$\alpha:=(3-p^{*})/2p^{*}$ we see that
\[
\norm{b}_{L^{2p^{*}}_x(B^+_{x,r})}\leq
\norm{b}^{\alpha}_{L^2_{x}(B^+_{x,r})}\norm{b-(b)_r}^{1-\alpha}_{L^6_{x}(B^+_{x,r})}
+\norm{b}^{\alpha}_{L^2_{x}(B^+_{x,r})}\norm{(b)_r}^{1-\alpha}_{L^6_{x}(B^+_{x,r})}
\]
\[
\leq C\norm{b}^{\alpha}_{L^2_{x}(B^+_{x,r})}\norm{\nabla
b}^{1-\alpha}_{L^2_{x}(B^+_{x,r})}
+\norm{b}_{L^2_{x}(B^+_{x,r})}r^{-\frac{1}{2}+\frac{\alpha}{2}},
\]
where we used Poincar\'{e} inequality. Taking $L^{2q^*}$ norm in
temporal variable and using Young's inequality,
\[
\norm{b}^2_{L^{2p^{*},2q^{*}}_{x,t}(Q^+_{z,r})} \leq
C\norm{b}^{2}_{L^{2,\infty}_{x,t}(Q^+_{z,r})}+C\norm{\nabla
b}^{2}_{L^{2,2}_{x,t}(Q^+_{z,r})}.
\]
Recalling \eqref{proof-est-mb}, we can have
\[
\frac{1}{r^2}\int _{Q^+_{z,r}}\abs{u}\abs{b}^{2}dxds\leq
CG_{u,p,q}(r)\Psi(r) \leq C(\frac{\rho}{r})\Psi(\rho)G_{u,p,q}(r).
\]
This completes the proof.
\end{proof}

Next, we may continue with scaled norm of
$L^{2,2}_{x,t}(Q^{+}_{z_0,r})$ estimate of $b$.

\begin{lemma}\label{lem3.2}
Let $z=(x, t)\in\Gamma\times I$. Suppose that $u\in
L^{p,q}_{x,t}(Q^{+}_{z,r})$ with $3/p+2/q=2$ and $3/2\leq p <3$.
Then for $0<r<\rho/4$
\begin{equation}\label{boundary-b}
K_{b}(r)\leq
C\bigg(\frac{\rho}{r}\bigg)^3G_{u,p,q}^2(\rho)\Psi(\rho)
+C\bigg(\frac{r}{\rho}\bigg)^2K_{b}(\rho).
\end{equation}
\end{lemma}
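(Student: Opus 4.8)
The plan is to freeze the outer radius $\rho$ and decompose the magnetic field on $Q^+_{z,\rho}$ as $b=\bar b+\tilde b$, isolating the linear decay from the nonlinear forcing. Since $\mathrm{div}\,u=\mathrm{div}\,b=0$, the induction equation in \eqref{MHD} can be written in divergence form $\partial_t b-\Delta b=-\mathrm{div}(u\otimes b-b\otimes u)$, so $b$ solves a pure heat equation (no pressure enters) subject to the slip conditions on $\Gamma$. I let $\bar b$ solve the homogeneous heat equation on $Q^+_{z,\rho}$ with the slip conditions and the same data as $b$ on the parabolic boundary, and put $\tilde b:=b-\bar b$, which solves the heat equation with source $-\mathrm{div}(u\otimes b-b\otimes u)$ and zero initial/boundary data. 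The associated localization (a spatial cutoff in $B^+_{x,\rho}$ and the resulting lower-order commutator terms) is routine and I suppress it here.

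For the homogeneous part, the flat-boundary slip conditions $\bar b\cdot\nu=0$, $(\nabla\times\bar b)\times\nu=0$ are exactly those under which odd reflection of the normal component and even reflection of the tangential components yield a componentwise caloric extension across $\Gamma$. The interior $L^2$-decay for caloric functions then applies up to the boundary and gives, for $0<r<\rho/4$, the bound $\frac1{r^3}\int_{Q^+_{z,r}}|\bar b|^2\le C(r/\rho)^2\frac1{\rho^3}\int_{Q^+_{z,\rho}}|\bar b|^2$. Writing $\bar b=b-\tilde b$ bounds the right side by $C(r/\rho)^2\big(K_b(\rho)+K_{\tilde b}(\rho)\big)$, which will furnish the term $C(r/\rho)^2K_b(\rho)$.

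For the forcing part I use $r<\rho/4$ and monotonicity of the domain to get $\frac1{r^3}\int_{Q^+_{z,r}}|\tilde b|^2\le(\rho/r)^3K_{\tilde b}(\rho)$, so everything reduces to bounding $K_{\tilde b}(\rho)$. The naive energy estimate $\|\tilde b\|_{L^2}^2\lesssim\rho^2\int|u|^2|b|^2$ is too lossy, because $\int|u|^2|b|^2$ cannot be closed by H\"older and Sobolev when $p<3$ (in space $b$ is controlled only up to $L^6$). Instead I keep the forcing to first power in each factor and invoke maximal regularity for the half-space heat equation with slip conditions in divergence form (the analogue of Lemma \ref{lem1}, with a one-derivative gain from the source): with $F:=u\otimes b-b\otimes u$, $\|\tilde b\|_{L^{a^*,c}_{x,t}(Q^+_{z,\rho})}\le C\|F\|_{L^{a,c}_{x,t}(Q^+_{z,\rho})}$, where $\tfrac1{a^*}=\tfrac1a-\tfrac13$. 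H\"older gives $\|F\|_{L^{a,c}}\le\|u\|_{L^{p,q}}\|b\|_{L^{\beta,\gamma}}$ with $\tfrac1a=\tfrac1p+\tfrac1\beta$, $\tfrac1c=\tfrac1q+\tfrac1\gamma$, and I take $(\beta,\gamma)$ on the energy scale $\tfrac3\beta+\tfrac2\gamma=\tfrac32$, $2\le\beta\le6$, so that $\|b\|_{L^{\beta,\gamma}(Q^+_{z,\rho})}^2\le C\rho\,\Psi(\rho)$. Imposing $a^*=c=2$ (so that $\tilde b\in L^2(Q^+_{z,\rho})$) together with $\tfrac3p+\tfrac2q=2$ forces the unique choice $\tfrac1\beta=\tfrac16+\tfrac{2}{3q}$, which lies in $[\tfrac16,\tfrac12]$ precisely when $q\in(2,\infty]$, i.e. $p\in[\tfrac32,3)$. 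Since every quantity is scale-covariant, squaring the resulting bound and collecting the powers of $\rho$ (using $G_{u,p,q}(\rho)=\rho^{-1}\|u\|_{L^{p,q}}$) yields $K_{\tilde b}(\rho)\le CG_{u,p,q}^2(\rho)\Psi(\rho)$.

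Combining the two parts and absorbing $(r/\rho)^2K_{\tilde b}(\rho)\le(\rho/r)^3K_{\tilde b}(\rho)$ gives the claim. I expect the principal difficulty to be the boundary maximal-regularity estimate for the induction equation in divergence form under slip conditions, with the correct scale-invariant constant, along with the localization commutators; by contrast the exponent bookkeeping—though delicate, and precisely what pins down the range $\tfrac32\le p<3$—is elementary once that linear estimate is available.
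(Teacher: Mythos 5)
Your skeleton is the same as the paper's: split $b$ into a caloric part (handled by even/odd reflection across $\Gamma$ plus interior $L^{2}$-decay of caloric functions) and a Duhamel part driven by $-\nabla\cdot(u\otimes b-b\otimes u)$ (handled by a one-derivative-gain estimate from $L^{6/5,2}_{x,t}$ into $L^{2,2}_{x,t}$, H\"older, and the energy-scale bound $\norm{b}^2_{L^{\beta,\gamma}_{x,t}(Q^+_{z,\rho})}\le C\rho\,\Psi(\rho)$); your exponent arithmetic ($1/\beta=1/6+2/(3q)$, forcing $q>2$, i.e. $3/2\le p<3$) matches the paper's choice $3/\alpha+2/\beta=3/2$ exactly. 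The genuine gap sits exactly where you flag it: the scale-invariant linear estimate $\norm{\tilde b}_{L^{2,2}_{x,t}}\le C\norm{F}_{L^{6/5,2}_{x,t}}$ for the forced heat problem on $B^{+}_{x,\rho}$ with slip conditions on the flat part and Dirichlet data on the curved part is never established, and with mixed conditions meeting along the edge $\partial B_{x,\rho}\cap\Gamma$ it is not an off-the-shelf fact. The paper dissolves this difficulty rather than confronting it: the slip conditions \eqref{noslip-slip} are precisely the parity conditions (tangential components even, normal component odd) under which the localized tensor extends across $\Gamma$ so that the \emph{whole-space} heat potential $\tilde w$ of the extended divergence-form source automatically satisfies $\partial_{x_3}\tilde w_i=0$ ($i=1,2$) and $\tilde w_3=0$ on $\{x_3=0\}$. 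All linear theory then takes place on $\R^3$, where only the Sobolev inequality $\norm{\tilde w}_{L^2(\R^3)}\le C\norm{\nabla\tilde w}_{L^{6/5}(\R^3)}$ and the standard heat-potential bound $\norm{\nabla\tilde w}_{L^{6/5,2}_{x,t}}\le C\norm{F\zeta}_{L^{6/5,2}_{x,t}}$ are needed; the caloric part $h=b-\tilde w$ then inherits the slip conditions and reflects to a caloric function, to which the decay estimate applies. Adopting this reflection is the cleanest way to complete your argument.

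A second, hidden trap lies in the step you dismiss as routine, namely ``a spatial cutoff \ldots and the resulting lower-order commutator terms.'' If the cutoff multiplies $b$ or the equation, the commutator produces source terms containing $\nabla\zeta\cdot\nabla b$; pushing these through your linear estimate yields a contribution to $\frac{1}{\rho^3}\norm{\tilde b}^2_{L^{2,2}_{x,t}}$ of size $CE_b(\rho)$, hence a term $C(\rho/r)^3\Psi(\rho)$ in \eqref{boundary-b} carrying \emph{no} factor $G^2_{u,p,q}(\rho)$. Such a term is fatal for the intended application: in the proof of Theorem \ref{main-thm-boundary-v} the coefficient of $(\rho/r)^3\Psi(\rho)$ must be of order $\epsilon^2$, and $\nabla b$ genuinely carries a full $E_b^{1/2}(\rho)$ with no smallness, so no refinement of the commutator estimate rescues this. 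The paper's cutoff is placed \emph{inside} the divergence, $g=-\nabla\cdot\bke{[u\otimes b-b\otimes u]\zeta}$, so every error term remains quadratic in $(u,b)$ with no derivatives of $b$, and only $G^2_{u,p,q}(\rho)\Psi(\rho)$ survives. (A minor further wrinkle: prescribing ``the same data as $b$'' on the lateral boundary for $\bar b$ requires a trace of the energy-class function $b$ there; the paper sidesteps this as well by constructing the forced part explicitly and defining the caloric part as the difference.)
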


\begin{proof}
For convenience, we write $x=(x_1, x_2, x_3)=(x',x_3)$ and by
translation, we assume that without loss of generality, $z=(0,0)\in
\Gamma\times I$. Let $\zeta(x,t)$ be a standard cut off function
supported in $Q_{\rho}$ such that $\zeta(x,t)=1$ in $Q_{\rho/2}$. We
set $g(x,t):= -\nabla\cdot([u\otimes b-b\otimes u]\zeta)$ in
$Q^+_{z, \rho}$ and we then define $\tilde{g}(x,t)$, an extension of
$g$ from $Q^+_{\rho}$ onto $Q_{\rho}$, in the following way:
$\tilde{g}(x,t)=g(x,t)$ if $x_3\geq 0$. On the other hand, if $x_3<
0$, then
\[
\tilde{g}_i(x',x_3,t)=g_i(x', -x_3, t),\qquad i=1,2
\]
\[
\tilde{g}_3(x',x_3,t)=-g_3(x', -x_3, t).
\]
This can be done by extending tangential components of $u$ and $b$
as even functions and normal components of $u$ and $b$ as odd
functions, respectively. We denote such extensions by $\tilde{u}$
and $\tilde{b}$ for simplicity. Here we also used the fact that
$\zeta$ and $\nabla' \zeta$ are even and $\partial_{x_3}\zeta$ is
odd with respect to $x_3-$variable, where $\nabla'=(\partial_{x_1},
\partial_{x_2})$.

Next, we define $\tilde{w}(x,t)$ for $(x,t)\in \R^3\times
(-\infty,0)$ by
\[
\tilde{w}(x,t)=\int_{-\infty}^{t}\int_{{\mathbb R}^3}
\frac{1}{(4\pi(t-s))^{\frac{3}{2}}}e^{-\frac{|x-y|^2}{4(t-s)}}\tilde{g}(y,s)
dyds,
\]
namely, $\tilde{w}$ satisfies
\[
\tilde{w}_t-\Delta \tilde{w}=\tilde{g} \qquad\mbox{in }\,\,
\R^3\times (-\infty,0).
\]
Moreover, we can see that $\partial_{x_3}\tilde{w}_i=0$ for $i=1,2$
and $\tilde{w}_3=0$ on $\{x_3=0\}$. Let $h=b-\tilde{w}$ in
$Q^+_{\rho}$. Then $h$ satisfies
\[
h_t-\Delta h=0\qquad \mbox{in} \ Q^{+}_{\frac{\rho}{2}}
\]
and $\partial_{x_3}h_i=0$ for $i=1,2$ and $h_3=0$ on $\{x_3=0\}\cap
Q_{\rho}$. Now we extend $h$ by the same manner as $g$, denoted by
$\tilde{h}$, from $Q^+_{\rho/2}$ onto $Q_{\rho/2}$. We then see that
\[
\tilde{h}_t-\Delta \tilde{h}=0 \qquad \mbox{in} \
Q_{\frac{\rho}{2}}.
\]
Via classical regularity theory, we have
\begin{equation}\label{tilde-h}
\int_{Q_r}|\tilde{h}|^2 dz \leq C(\frac{r}{\rho})^5
\int_{Q_{\frac{\rho}{2}}}|\tilde{h}|^2 dz.
\end{equation}
On the other hand, due to Sobolev embedding, we have $
\norm{\tilde{w}}_{L^2(B_{\rho})}\leq
\norm{\tilde{w}}_{L^2(\R^3)}\leq C\|\nabla
\tilde{w}\|_{L^{\frac{6}{5}}(\R^3)}$ and we then take $L^2$
integration for the above in time interval $(-\rho^2, 0)$ such that
we obtain

\[
\|\tilde{w}\|_{L^{2,2}_{x,t}(Q_{\rho})}\leq C\|\nabla
\tilde{w}\|_{L^{\frac{6}{5},2}_{x,t}(\R^3\times (-\rho^2,0))}\leq
C\|\tilde{u}\tilde{b}\zeta\|_{L^{\frac{6}{5},2}_{x,t}(\R^3\times
(-\rho^2,0))}
\]
\begin{equation}\label{tilde-w}
\leq C\|\tilde{u}\tilde{b}\|_{L^{\frac{6}{5},2}_{x,t}(Q_{\rho})}\leq
C\|\tilde{u}\|_{L^{p,q}_{x,t}(Q_{\rho})}\|\tilde{b}\|_{L^{\alpha,\beta}_{x,t}(Q_{\rho})},
\end{equation}
where $3/\alpha+2/\beta=3/2$ and $2\le \alpha < 6$, since $3/2\le
p<3$.
Using the estimate \eqref{tilde-w} and Sobolev inequality, we have
\[
\frac{1}{\rho^{3}}\|\tilde{w}\|^2_{L^{2,2}_{x,t}(Q_{\frac{\rho}{2}})}\leq
C\frac{1}{\rho^2}\|\tilde{u}\|^2_{L^{p,q}_{x,t}(Q_{\rho})}\frac{1}{\rho}\|\tilde{b}\|^2_{L^{\alpha,\beta}_{x,t}(Q_{\rho})}
\leq C\frac{1}{\rho^2}\|u\|^2_{L^{p,q}_{x,t}(Q^+_{\rho})}
\frac{1}{\rho}\|b\|^2_{L^{\alpha,\beta}_{x,t}(Q^+_{\rho})}
\]
\begin{equation}\label{estimate-bar_w1}
\leq
\frac{C}{\rho^2}\|u\|^2_{L^{p,q}_{x,t}(Q^+_{\rho})}(A_{b}(\rho)+E_{b}(\rho))
\leq CG^2_{u,p,q}(\rho)\Psi(\rho).
\end{equation}
Combining estimates \eqref{tilde-h} and \eqref{estimate-bar_w1}, we
obtain
\[
K_{b}(r)=\frac{1}{r^3}\norm{b}_{L^{2,2}_{x,t}(Q^+_r)}^2
\leq\frac{1}{r^3}\|\tilde{w}\|_{L^{2,2}_{x,t}(Q_r)}^2+\frac{1}{r^3}\|\tilde{h}\|_{L^{2,2}_{x,t}(Q_r)}^2
\]
\[
\leq C(\frac{\rho}{r})^3G_{u,p,q}^2(\rho)\Psi(\rho)
+C(\frac{r}{\rho})^2\frac{1}{\rho^3}\|b\|_{L^{2,2}_{x,t}(Q^+_{\frac{\rho}{2}})}^2
\]
\[
\leq C(\frac{\rho}{r})^3G_{u,p,q}^2(\rho)\Psi(\rho)
+C(\frac{r}{\rho})^2K_{b}(\rho).
\]
This completes the proof.
\end{proof}

In next lemma we show an estimate of the gradient of pressure
(compare to \cite[Lemma 11]{GKT06}).
\begin{lemma}\label{lem4.2}
Let $z=(x, t)\in\Gamma\times I$. Then for $0<r<\rho/4$,
\[
Q_{1}(r)\leq
C\bigg(\frac{\rho}{r}\biggr)\Big(A_u^{\frac{3-2\kappa}{2\kappa}}(\rho)
E_u^{\frac{1}{\lambda}}(\rho) +
A_b^{\frac{3-2\kappa}{2\kappa}}(\rho)
E_b^{\frac{1}{\lambda}}(\rho)\Big)
\]
\begin{equation}\label{estimate-local-Q}
+C\bigg(\frac{r}{\rho}\bigg)\Big(E_u^{\frac{1}{2}}(\rho)+Q_{1}(\rho)\Big),
\end{equation}
where $\kappa$ and $\lambda$ are numbers in \eqref{pq}.
\end{lemma}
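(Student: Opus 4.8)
The plan is to read the pressure gradient off the Stokes structure of the momentum equation, splitting $\pi$ into a part generated by the localized nonlinearity, which accounts for the first group of terms, and a part solving a homogeneous Stokes system, which decays and accounts for the second group. Throughout I take $z=(0,0)$ as in the other lemmas and fix a cutoff $\zeta\in C_0^\infty(Q_\rho)$ with $\zeta\equiv 1$ on $Q_{\rho/2}$, $|\nabla\zeta|\lesssim\rho^{-1}$ and $|\partial_t\zeta|\lesssim\rho^{-2}$. Since $\mathrm{div}\,u=\mathrm{div}\,b=0$, the momentum equation for $u$ is the Stokes system $u_t-\Delta u+\nabla\pi=-\mathrm{div}(u\otimes u-b\otimes b)$ with no-slip boundary data, which is what makes Lemma \ref{lem1} applicable.

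First I would introduce $(v,\pi_1)$ as the solution of the Stokes system on $\R^3_+\times(t-\rho^2,t)$ with no-slip boundary condition, zero initial data and force $f:=-\mathrm{div}\big((u\otimes u-b\otimes b)\zeta\big)$, and set $\pi_2:=\pi-\pi_1$. Because $\zeta\equiv 1$ on $Q^+_{\rho/2}$, the difference $(u-v,\pi_2)$ solves the homogeneous Stokes system $\partial_t(u-v)-\Delta(u-v)+\nabla\pi_2=0$, $\mathrm{div}(u-v)=0$ in $Q^+_{\rho/2}$, with $u-v=0$ on $\Gamma$. Applying Lemma \ref{lem1} with $(l,m)=(\kappa,\lambda)$ bounds $\|\nabla\pi_1\|_{L^{\kappa,\lambda}_{x,t}}$ by $\|f\|_{L^{\kappa,\lambda}_{x,t}}$. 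Writing $f=(u\cdot\nabla u-b\cdot\nabla b)\zeta+(u\otimes u-b\otimes b)\cdot\nabla\zeta$ and estimating each factor by H\"older in space, Sobolev--Poincar\'e ($\|u\|_{L^6}\lesssim\|\nabla u\|_{L^2}$ up to its average), and H\"older in time, the interpolation between $L^2_x$ and $L^6_x$ produces exactly the exponents $A_u^{\frac{3-2\kappa}{2\kappa}}(\rho)E_u^{\frac{1}{\lambda}}(\rho)$ and $A_b^{\frac{3-2\kappa}{2\kappa}}(\rho)E_b^{\frac{1}{\lambda}}(\rho)$; since the contribution to $Q_1(r)$ carries the normalization $1/r$ rather than $1/\rho$, this yields the factor $\rho/r$ in front, i.e. the first line of \eqref{estimate-local-Q}. (A quick check: with $\frac1\kappa=\frac1a+\frac12$ one finds $a=\frac{2\kappa}{2-\kappa}$, and the interpolation power of $\|u\|_{L^2}$ is $\frac{3-2\kappa}{\kappa}$, matching $A_u^{\frac{3-2\kappa}{2\kappa}}$.)

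For $\pi_2$ I would invoke local boundary regularity for the homogeneous Stokes system. Since $(u-v,\pi_2)$ solves a forcing-free Stokes system in $Q^+_{\rho/2}$ satisfying the no-slip condition on the flat piece $\Gamma$, the pressure gradient $\nabla\pi_2$ is smooth up to the boundary, so its mean oscillation obeys a first--order decay; rescaling gives $\frac1r\|\nabla\pi_2\|_{L^{\kappa,\lambda}_{x,t}(Q^+_r)}\lesssim (r/\rho)\,\frac1\rho\|\nabla\pi_2\|_{L^{\kappa,\lambda}_{x,t}(Q^+_{\rho/2})}$ for $r<\rho/4$. Estimating $\nabla\pi_2=\nabla\pi-\nabla\pi_1$ on $Q^+_{\rho/2}$ by $Q_1(\rho)$ together with the just-obtained bound for $\nabla\pi_1$ (whose energy contribution enters as $E_u^{1/2}(\rho)$) produces the second line $C(r/\rho)\big(E_u^{1/2}(\rho)+Q_1(\rho)\big)$. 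As in the proof of Lemma \ref{lem3.2}, the flat boundary is handled by extending the tangential components of $u,b$ evenly and the normal components oddly, so that the homogeneous solution extends across $\Gamma$ and interior Stokes regularity applies to the extension.

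The main obstacle is this boundary regularity step for $\pi_2$: one must justify the up-to-the-boundary smoothing and the correct scaling of the decay estimate for the homogeneous Stokes pressure under the no-slip condition, since the pressure is determined only through the solenoidal structure and its boundary behaviour is governed by the Neumann-type relation $\partial_\nu\pi=\Delta u_3$ on $\Gamma$. Controlling this via the reflection construction, and verifying that the cutoff commutator terms supported in the annular region $Q_\rho\setminus Q_{\rho/2}$ respect the required scaling, is the delicate part; the spatial H\"older and interpolation bookkeeping that generates the explicit exponents is routine once the decomposition $\pi=\pi_1+\pi_2$ is set up.
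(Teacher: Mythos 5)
Your overall strategy coincides with the paper's: decompose $\pi=\pi_1+\pi_2$, where $\pi_1$ is the pressure of a Stokes problem forced by the nonlinearity (estimated by the maximal regularity of Lemma \ref{lem1} plus interpolation between $L^{2,\infty}_{x,t}$ and the energy norm), and $\pi_2$ is the pressure of a homogeneous Stokes system near the flat boundary, which must supply the decay factor $r/\rho$. The paper localizes by solving on an intermediate smooth domain $\tilde{B}^+$ with the full nonlinearity as force, while you localize by cutting off the force and solving on the whole half-space; that difference is cosmetic (your commutator term $(u\otimes u-b\otimes b)\cdot\nabla\zeta$ is controllable, though it does not reproduce the exact exponents $A^{\frac{3-2\kappa}{2\kappa}}E^{\frac{1}{\lambda}}$ without extra care).

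The genuine gap is your treatment of $\pi_2$. You assert that, because $(u-v,\pi_2)$ solves the homogeneous Stokes system in $Q^+_{\rho/2}$ with no-slip on $\Gamma$, the gradient $\nabla\pi_2$ is \emph{smooth up to the boundary} and hence enjoys a first-order oscillation decay, and you propose to justify this by the even/odd reflection used in Lemma \ref{lem3.2}. Both claims fail. The reflection argument in Lemma \ref{lem3.2} works only because $b$ solves a \emph{heat} equation with the Navier-type conditions $b_3=0$, $\partial_{x_3}b_i=0$; for the Stokes system with no-slip data the even/odd extension does not satisfy the equations across $\Gamma$ (the pressure couples the components), and indeed no such reflection principle exists --- if it did, boundary regularity would reduce to interior regularity, contradicting the known counterexamples (e.g.\ unboundedness of $\nabla u$ and $\nabla\pi$ for local solutions of the homogeneous Stokes system near a flat boundary with no-slip data). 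Moreover $\pi_2$ is only harmonic in space with an uncontrolled Neumann datum $\partial_\nu\pi_2=\Delta(u-v)\cdot\nu$ on $\Gamma$, so harmonicity alone cannot give decay of $\|\nabla\pi_2\|$ on $Q^+_r$ in terms of $\|\nabla\pi_2\|$ on $Q^+_{\rho/2}$, which is exactly the inequality you write down.

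What the paper does instead, and what your argument is missing, is Seregin's local boundary estimate for the nonstationary Stokes system (\cite{GAS02}): for $\kappa'$ with $3/\kappa'+2/\lambda=2$ one has
\[
\|\nabla^{2}w\|_{L_{x,t}^{\kappa',\lambda}(Q^{+}_{\rho/4})}
+\|\nabla\pi_{2}\|_{L_{x,t}^{\kappa',\lambda}(Q^{+}_{\rho/4})}
\leq\frac{C}{\rho^{2}}\Big(\frac{1}{\rho^{2}}\|w\|_{L_{x,t}^{\kappa,\lambda}(Q^{+}_{\rho/2})}
+\frac{1}{\rho}\|\nabla w\|_{L_{x,t}^{\kappa,\lambda}(Q^{+}_{\rho/2})}
+\frac{1}{\rho}\|\pi_{2}\|_{L_{x,t}^{\kappa,\lambda}(Q^{+}_{\rho/2})}\Big),
\]
i.e.\ a gain of \emph{spatial} integrability only (the temporal exponent $\lambda$ is unchanged --- this is the structural limitation near the boundary), with the right-hand side consisting of \emph{lower-order} norms, not $\nabla\pi_2$ itself. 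The decay factor then comes from H\"older's inequality, $\|\nabla\pi_{2}\|_{L_{x,t}^{\kappa,\lambda}(Q^{+}_{r})}\leq Cr^{2}\|\nabla\pi_{2}\|_{L_{x,t}^{\kappa',\lambda}(Q^{+}_{r})}$, so that $\frac1r\|\nabla\pi_2\|_{L^{\kappa,\lambda}_{x,t}(Q^+_r)}\lesssim \frac{r}{\rho}\cdot(\cdots)$; and the term $E_u^{1/2}(\rho)$ in the conclusion arises from the lower-order norm $\frac1\rho\|\nabla w\|_{L^{\kappa,\lambda}_{x,t}}$ (via $\|\nabla u\|_{L^{\kappa,\lambda}_{x,t}(Q^+_\rho)}\le C\rho^2E_u^{1/2}(\rho)$), not from $\nabla\pi_1$ as you suggest. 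Without invoking this (or an equivalent) boundary estimate for the Stokes system, the second line of \eqref{estimate-local-Q} is not proved.
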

\begin{proof}
We assume, via translation, that $z=(x, t)=(0, 0)$. We choose a
domain $\tilde{B}^{+}$ with a boundary such that
$B^{+}_{\frac{\rho}{2}} \subset \tilde{B}^{+} \subset B^{+}_{\rho}$,
and we denote $\tilde{Q}^{+}:=\tilde{B}^{+} \times (-\rho^2,0)$. Let
$(v,\pi_1)$ be the unique solution of the following the Stokes
system
\[
v_t-\Delta v+\nabla \pi_1=-(u\cdot \nabla)u+(b\cdot \nabla)b, \quad
\text{div}\, v=0 \ \ \text{in} \ \tilde{Q}^{+},
\]
\[
(\pi_1)_{\tilde{B}^{+}}=\aint_{\tilde{B}^{+}}\pi_1(y,t)dy=0, \quad
t\in(-\rho^2,0),
\]
\[
v=0 \quad \partial\tilde{B}^{+} \times [-\rho^2,0], \quad v=0 \quad
\tilde{B}^{+} \times \{t=-\rho^2\}.
\]
Using the Stokes estimate \eqref{stokes-estimate}, we have the
following estimate
\[
\frac{1}{\rho^2}\|v\|_{L_{x,t}^{\kappa,\lambda}(\tilde{Q}^{+})}
+\frac{1}{\rho}\|\nabla
v\|_{L_{x,t}^{\kappa,\lambda}(\tilde{Q}^{+})}
+\|v_t\|_{L_{x,t}^{\kappa,\lambda}(\tilde{Q}^{+})}+\|\nabla^{2}v\|_{L_{x,t}^{\kappa,\lambda}(\tilde{Q}^{+})}
\]
\[
+\frac{1}{\rho}\|\pi_{1}\|_{L_{x,t}^{\kappa,\lambda}(\tilde{Q}^{+})}
+\|\nabla \pi_{1}\|_{L_{x,t}^{\kappa,\lambda}(\tilde{Q}^{+})}
\]
\[
\leq
C\Big(\|(u\cdot\nabla)u\|_{L_{x,t}^{\kappa,\lambda}(\tilde{Q}^{+})}+\|(b\cdot\nabla)b\|_{L_{x,t}^{\kappa,\lambda}(\tilde{Q}^{+})}
\Big)
\]
\[
\leq
C\Big(\|(u\cdot\nabla)u\|_{L_{x,t}^{\kappa,\lambda}(Q_{\rho}^{+})}
+\|(b\cdot\nabla)b\|_{L_{x,t}^{\kappa,\lambda}(Q_{\rho}^{+})}\Big)
\]
\[
\leq C\big(\rho A_u^{\frac{3-2\kappa}{2\kappa}}(\rho)
E_u^{\frac{1}{\lambda}}(\rho) +\rho
A_b^{\frac{3-2\kappa}{2\kappa}}(\rho)
E_b^{\frac{1}{\lambda}}(\rho)\big),
\]
where we used the following estimates in last inequality above:
\[
\norm{(u\cdot \nabla)u}_{L^{\kappa,\lambda}_{x,t}(Q_{\rho}^{+})}
+\norm{(b\cdot \nabla)b}_{L^{\kappa,\lambda}_{x,t}(Q_{\rho}^{+})}
\]
\[
\leq
\norm{u}^{\frac{3-2\kappa}{\kappa}}_{L^{2,\infty}_{x,t}(Q_{\rho}^{+})}\norm{\nabla
u}^{\frac{2}{\lambda}}_{L^{2,2}_{x,t}(Q_{\rho}^{+})}
+\norm{b}^{\frac{3-2\kappa}{\kappa}}_{L^{2,\infty}_{x,t}(Q_{\rho}^{+})}\norm{\nabla
b}^{\frac{2}{\lambda}}_{L^{2,2}_{x,t}(Q_{\rho}^{+})}
\]
\[
\leq C\rho
A_u^{\frac{3-2\kappa}{\kappa}}(\rho)E^{\frac{2}{\lambda}}_u(\rho)
+C\rho
A_b^{\frac{3-2\kappa}{\kappa}}(\rho)E^{\frac{2}{\lambda}}_b(\rho).
\]
 Next, let $w=u-v$ and
 $\pi_2=\pi-(\pi)_{B_{\frac{\rho}{2}}^{+}}-\pi_1$. Then
 $(w,\pi_2)$ solves the following the boundary value problem:
\[
w_t-\Delta w+\nabla \pi_2=0, \quad \text{div}\, w=0 \qquad\text{in}
\ \tilde{Q}^{+},
\]
\[
w=0 \quad \ \text{on}\ (\partial\tilde{B}^{+}\cap\{x_3=0\}) \times
[-\rho^2,0].
\]
Now we take $\kappa'$ such that $3/\kappa'+2/\lambda=2$. Then from
the local estimate near the boundary for the Stokes systems (see
\cite{GAS02}), we obtain
\[
\|\nabla^{2}w\|_{L_{x,t}^{\kappa',\lambda}(Q^{+}_{\frac{\rho}{4}})}
+\|\nabla\pi_{2}\|_{L_{x,t}^{\kappa',\lambda}(Q^{+}_{\frac{\rho}{4}})}\\
\]
\[
\leq\frac{C}{\rho^{2}}\bigg(\frac{1}{\rho^{2}}\|w\|_{L_{x,t}^{\kappa,\lambda}(Q^{+}_{\frac{\rho}{2}})}
+\frac{1}{\rho}\|\nabla
w\|_{L_{x,t}^{\kappa,\lambda}(Q^{+}_{\frac{\rho}{2}})}
+\frac{1}{\rho}\|\pi_{2}\|_{L_{x,t}^{\kappa,\lambda}(Q^{+}_{\frac{\rho}{2}})}\bigg)
\]
\[\leq\frac{C}{\rho^{2}}\bigg(\frac{1}{\rho}\|\nabla u\|_{L_{x,t}^{\kappa,\lambda}(Q^{+}_{\frac{\rho}{2}})}
+\|\nabla\pi\|_{L_{x,t}^{\kappa,\lambda}(Q^{+}_{\frac{\rho}{2}})}
+\frac{1}{\rho}\|\nabla
v\|_{L_{x,t}^{\kappa,\lambda}(Q^{+}_{\frac{\rho}{2}})}
+\frac{1}{\rho}\|\pi_{1}\|_{L_{x,t}^{\kappa,\lambda}(Q^{+}_{\frac{\rho}{2}})}\bigg),
\]
where Poincar\'e-Sobolev inequality is used. Since $\|\nabla
u\|_{L_{x,t}^{\kappa,\lambda}(Q^{+}_{\rho})}\leq
C\rho^{2}E_u^{\frac{1}{2}}(\rho)$, we have
\[
\|\nabla\pi_{2}\|_{L_{x,t}^{\kappa',\lambda}(Q^{+}_{\frac{\rho}{4}})}\leq
\frac{C}{\rho^2}\bke{\rho E_u^{\frac{1}{2}}(\rho)+\rho Q_{1}(\rho)
+\rho A_u^{\frac{3-2\kappa}{2\kappa}}(\rho)
E_u^{\frac{1}{\lambda}}(\rho) +\rho
A_b^{\frac{3-2\kappa}{2\kappa}}(\rho) E_b^{\frac{1}{\lambda}}(\rho)}
\]
\[
=\frac{C}{\rho}\bigl(E_u^{\frac{1}{2}}(\rho)+ Q_{1}(\rho)+
A_u^{\frac{3-2\kappa}{2\kappa}}(\rho) E_u^{\frac{1}{\lambda}}(\rho)
+ A_b^{\frac{3-2\kappa}{2\kappa}}(\rho)
E_b^{\frac{1}{\lambda}}(\rho)\bigr).
\]
Let $0\leq r\leq\rho/4$. Noting that
$\|\nabla\pi_{2}\|_{L_{x,t}^{\kappa,\lambda}(Q^{+}_{r})}\leq
Cr^{2}\|\nabla\pi_{2}\|_{L_{x,t}^{\kappa',\lambda}(Q^{+}_{r})}$, we
have
\begin{equation*}
Q_{1}(r)=\frac{1}{r}\|\nabla\pi\|_{L_{x,t}^{\kappa,\lambda}(Q^{+}_{r})}
\leq\frac{1}{r}\bigl(\|\nabla\pi_{1}\|_{L_{x,t}^{\kappa,\lambda}(Q^{+}_{r})}
+\|\nabla\pi_{2}\|_{L_{x,t}^{\kappa,\lambda}(Q^{+}_{r})}\bigr)
\end{equation*}
\begin{equation*}
\leq\frac{1}{r}\bigl(\|\nabla\pi_{1}\|_{L_{x,t}^{\kappa,\lambda}(Q^{+}_{\rho})}
+r^2\|\nabla\pi_{2}\|_{L_{x,t}^{\kappa',\lambda}(Q^{+}_{r})}\bigr)
\end{equation*}
\begin{equation*}
\leq C(\frac{\rho}{r})\bigl(
A_u^{\frac{3-2\kappa}{2\kappa}}(\rho)E^{\frac{1}{\lambda}}_u(\rho)+
A_b^{\frac{3-2\kappa}{2\kappa}}(\rho)E^{\frac{1}{\lambda}}_b(\rho)\bigr)
\end{equation*}
\begin{equation*}
\quad +C(\frac{r}{\rho})\bigl(E_u^{\frac{1}{2}}(\rho)+ Q_{1}(\rho)+
A_u^{\frac{3-2\kappa}{2\kappa}}(\rho) E_u^{\frac{1}{\lambda}}(\rho)
+ A_b^{\frac{3-2\kappa}{2\kappa}}(\rho)
E_b^{\frac{1}{\lambda}}(\rho)\bigr)
\end{equation*}
\begin{equation*}
\leq C(\frac{\rho}{r})\Big(A_u^{\frac{3-2\kappa}{2\kappa}}(\rho)
E_u^{\frac{1}{\lambda}}(\rho) +
A_b^{\frac{3-2\kappa}{2\kappa}}(\rho)
E_b^{\frac{1}{\lambda}}(\rho)\Big)
+C(\frac{r}{\rho})\Big(E_u^{\frac{1}{2}}(\rho)+Q_{1}(\rho)\Big).
\end{equation*}
This completes the proof.
\end{proof}
We remark that, via Young's inequality, \eqref{estimate-local-Q} can
be estimated as follows:
\begin{equation}\label{est-local-Q1}
Q_{1}(r)\leq
C\bke{\bigg(\frac{\rho}{r}\bigg)+\bigg(\frac{r}{\rho}\bigg)}\Psi(\rho)
+C\bke{\frac{r}{\rho}}\bke{Q_{1}(\rho)+1}.
\end{equation}

Next lemma shows an estimate of a scaled norm of pressure.
\begin{lemma}\label{lem4.3}
Let $z=(x, t)\in\Gamma\times I$. Suppose that $\nabla \pi \in
L^{\kappa,\lambda}_{x,t}(Q_{\rho})$ and $\pi \in
L^{\kappa^*,\lambda}_{x,t}(Q_{\rho})$, where $3/\kappa+2/\lambda=4$,
$1/\kappa^*=1/\kappa-1/3$ and $1 < \lambda < 2$. Then for
$0<r<\rho/4$,
\begin{equation}\label{boundary-pressure}
Q(r)\leq
C\bke{\frac{\rho}{r}}Q_1(\rho)+C\bke{\frac{r}{\rho}}^{\frac{3}{\kappa^*}-1}Q(\rho).
\end{equation}
\end{lemma}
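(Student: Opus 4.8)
The plan is to bound the scaled pressure norm $Q(r)$ by decomposing $\pi$ on the larger half-ball into its spatial mean and the oscillation about that mean, and then estimating the two pieces by the Poincar\'e--Sobolev inequality and H\"older's inequality, respectively. Fix $s\in(t-r^2,t)$ and set $(\pi)_{\rho}(s)=\aint_{B^+_{x,\rho}}\pi(\cdot,s)$. Since $B^+_{x,r}\subset B^+_{x,\rho}$ for $0<r<\rho/4$, I would write
\[
\norm{\pi(\cdot,s)}_{L^{\kappa^*}(B^+_{x,r})}\leq \norm{\pi(\cdot,s)-(\pi)_{\rho}(s)}_{L^{\kappa^*}(B^+_{x,\rho})}+\norm{(\pi)_{\rho}(s)}_{L^{\kappa^*}(B^+_{x,r})},
\]
keeping the oscillation on the full ball $B^+_{x,\rho}$ while leaving the (spatially constant) mean on the small ball $B^+_{x,r}$.

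For the oscillation term I would invoke the Poincar\'e--Sobolev inequality on the Lipschitz (indeed convex) domain $B^+_{x,\rho}$,
\[
\norm{\pi(\cdot,s)-(\pi)_{\rho}(s)}_{L^{\kappa^*}(B^+_{x,\rho})}\leq C\norm{\nabla\pi(\cdot,s)}_{L^{\kappa}(B^+_{x,\rho})}.
\]
The key point is that the relation $1/\kappa^*=1/\kappa-1/3$ is exactly the Sobolev exponent relation in $\R^3$, so the constant $C$ here is scale invariant (independent of $\rho$). For the mean term, since $(\pi)_{\rho}(s)$ is constant in $x$, H\"older's inequality gives $\abs{(\pi)_{\rho}(s)}\leq C\rho^{-3/\kappa^*}\norm{\pi(\cdot,s)}_{L^{\kappa^*}(B^+_{x,\rho})}$, whence
\[
\norm{(\pi)_{\rho}(s)}_{L^{\kappa^*}(B^+_{x,r})}=\abs{(\pi)_{\rho}(s)}\,\abs{B^+_{x,r}}^{1/\kappa^*}\leq C\bke{\frac{r}{\rho}}^{3/\kappa^*}\norm{\pi(\cdot,s)}_{L^{\kappa^*}(B^+_{x,\rho})}.
\]

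Combining these two bounds and raising to the power $\lambda$, I would integrate in $s$ over $(t-r^2,t)\subset(t-\rho^2,t)$; since all integrands are nonnegative, enlarging the temporal interval to $(t-\rho^2,t)$ on the right is harmless and yields
\[
\norm{\pi}_{L^{\kappa^*,\lambda}_{x,t}(Q^+_{z,r})}\leq C\norm{\nabla\pi}_{L^{\kappa,\lambda}_{x,t}(Q^+_{z,\rho})}+C\bke{\frac{r}{\rho}}^{3/\kappa^*}\norm{\pi}_{L^{\kappa^*,\lambda}_{x,t}(Q^+_{z,\rho})}.
\]
Multiplying through by $1/r$ and recognizing $Q(r)=\frac1r\norm{\pi}_{L^{\kappa^*,\lambda}_{x,t}(Q^+_{z,r})}$, $Q_1(\rho)=\frac1\rho\norm{\nabla\pi}_{L^{\kappa,\lambda}_{x,t}(Q^+_{z,\rho})}$ and $Q(\rho)=\frac1\rho\norm{\pi}_{L^{\kappa^*,\lambda}_{x,t}(Q^+_{z,\rho})}$ produces the factor $\rho/r$ in front of $Q_1(\rho)$ and $(r/\rho)^{3/\kappa^*-1}$ in front of $Q(\rho)$, which is precisely \eqref{boundary-pressure}. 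A final sanity check is that the exponent on the decaying term is positive: using $3/\kappa+2/\lambda=4$ and $1/\kappa^*=1/\kappa-1/3$ one computes $3/\kappa^*-1=2-2/\lambda\in(0,1)$ for $1<\lambda<2$, so the second term genuinely contracts as $r/\rho\to0$. There is no serious obstacle in this argument; the only points requiring care are the scale invariance of the Poincar\'e--Sobolev constant (guaranteed by the exponent relation) and the bookkeeping of the scaling powers, both of which are routine.
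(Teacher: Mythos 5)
Your proposal is correct and follows essentially the same route as the paper's proof: the same decomposition of $\pi$ into its mean over $B^+_{x,\rho}$ plus the oscillation, the Poincar\'e--Sobolev inequality (with the scale-invariant exponent relation $1/\kappa^*=1/\kappa-1/3$) for the oscillation term, H\"older's inequality for the mean term, and identical scaling bookkeeping to produce the factors $\rho/r$ and $(r/\rho)^{3/\kappa^*-1}$. Your write-up is in fact a bit more careful than the paper's (explicit time-slice argument, verification that $3/\kappa^*-1=2-2/\lambda>0$), so there is nothing to correct.
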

\begin{proof}
Since $1<\lambda<2$, we note that $\frac{3}{2}<\kappa^*<3$. We first
observe that due to H\"{o}lder inequality
\[
\|(\pi)_{\rho }\|_{L_x^{\kappa^*}(B^+_{x,r})}\leq
C(\frac{r}{\rho})^{\frac{3}{\kappa^*}}\|\pi\|_{L_x^{\kappa}(B^+_{x,\rho})}.
\]
Therefore, due to Poincar\'e-Sobolev inequality, we have
\[
\norm{\pi}_{L^{\kappa^*,\lambda}_{x,t}(Q^+_{z,r})}\leq
\norm{\pi-(\pi)_{\rho}}_{L^{\kappa^*,\lambda}_{x,t}(Q^+_{z,r})}
+\norm{(\pi)_{\rho}}_{L^{\kappa^*,\lambda}_{x,t}(Q^+_{z,r})}
\]
\[
\leq C\|\nabla \pi\|_{L^{\kappa,\lambda}_{x,t}(Q^+_{z,r})}
+C(\frac{r}{\rho})^{\frac{3}{\kappa^*}}
\|\pi\|_{L^{\kappa^*,\lambda}_{x,t}(Q^+_{z,\rho})}.
\]
Dividing both sides by $r$, we have
\[
\frac{1}{r}\|\pi\|_{L^{\kappa^*,\lambda}_{x,t}(Q^+_{z,r})}\leq
C(\frac{\rho}{r})\frac{1}{\rho}\|\nabla
\pi\|_{L^{\kappa,\lambda}_{x,t}(Q^+_{z,\rho})}
+C(\frac{r}{\rho})^{\frac{3}{\kappa^*}-1}\frac{1}{\rho}
\|\pi\|_{L^{\kappa^*,\lambda}_{x,t}(Q^+_{z,\rho})}.
\]
This completes the proof.
\end{proof}

We are ready to present the proof of Theorem
\ref{main-thm-boundary-v}.
\begin{pfthm1}
We note first that via H\"older's inequality, it suffices to show
the case that $3/p+2/q=2$, $2<q<\infty$. Recalling Lemma
\ref{lem4.2} and Lemma \ref{lem4.3}, we have
\[
\frac{1}{r^2}\int_{Q^+_{z,r}}\abs{u}\abs{\pi} dz \leq
\frac{1}{r}\norm{u}_{L^{p,q}_{x,t}(Q^+_{z,r})}\frac{1}{r}\norm{\pi}_{L^{\kappa^{*},\lambda}_{x,t}(Q^+_{z,r})}
\]
\[
\leq C\epsilon\Bigg( (\frac{\rho}{r})Q_1(\frac{\rho}{2})
+(\frac{r}{\rho})^{\frac{3}{\kappa^{*}}-1}Q(\frac{\rho}{2})\Biggr)
\]
\begin{equation}\label{estimate-u-b}
\leq C\epsilon \bke{(\frac{\rho}{r})^2+1}\Psi(\rho) +C\epsilon
\bke{Q_{1}(\rho)+1}
+C\epsilon(\frac{r}{\rho})^{\frac{3}{\kappa^{*}}-1}Q(\rho),
\end{equation}
where \eqref{est-local-Q1} is also used. With aid of Lemma
\ref{estimate-ub}, Lemma \ref{estimate-Mu}, \eqref{est-local-Q1} and
\eqref{estimate-u-b}, we have
\begin{equation*}
\Psi(\frac{r}{2}) \leq
C\epsilon^{\frac{2}{3}}(\frac{\rho}{r})^{\frac{2}{3}}\Psi^{\frac{2}{3}}(\rho)
+C\epsilon(\frac{\rho}{r})\Psi(\rho)
+C\epsilon^2(\frac{\rho}{r})^3\Psi(\rho)+C(\frac{r}{\rho})^2K_{b}(\rho)
\end{equation*}
\begin{equation*}
+C\epsilon \big((\frac{\rho}{r})^2+1)\big)\Psi(\rho) +C\epsilon
(Q_{1}(\rho)+1)
+C\epsilon(\frac{r}{\rho})^{\frac{3}{\kappa^{*}}-1}Q(\rho)
\end{equation*}
\begin{equation*}
\leq
C\bkt{\epsilon^2(\frac{\rho}{r})^3+\epsilon(\frac{\rho}{r})^2+(\epsilon^{\frac{1}{2}}+\epsilon)
(\frac{\rho}{r})+\epsilon+(\frac{r}{\rho})^2}\Psi(\rho)
\end{equation*}
\begin{equation}\label{estimate-psi}
+C\epsilon Q_{1}(\rho)+C\epsilon
+C\epsilon(\frac{r}{\rho})^{\frac{3}{\kappa^{*}}-1}Q(\rho),
\end{equation}
where we used the Young's inequality and $K_b(\rho)\leq \Psi(\rho)$.
Let $\epsilon_1$ and $\epsilon_2$ be small positive numbers, which
will be specified later.

Adding $\epsilon_1 Q_1(\frac{r}{2})$ and $\epsilon_2 Q(\frac{r}{2})$
to both sides in \eqref{estimate-psi}, and using
\eqref{est-local-Q1} and Lemma \ref{lem4.3}, we obtain
\begin{equation*}
\Psi(\frac{r}{2})+\epsilon_1Q_1(\frac{r}{2})+\epsilon_2Q(\frac{r}{2})
\end{equation*}
\begin{equation*}
\leq
C\bkt{\epsilon^2\bke{\frac{\rho}{r}}^3+\epsilon(\frac{\rho}{r})^2+(\epsilon^{\frac{1}{2}}+\epsilon)
(\frac{\rho}{r})+\epsilon+(\frac{r}{\rho})^2}\Psi(\rho)
\end{equation*}
\begin{equation*}
+C\epsilon Q_{1}(\rho)+C\epsilon
+C\epsilon(\frac{r}{\rho})^{\frac{3}{\kappa^{*}}-1}Q(\rho)
+C\epsilon_1\bkt{(\frac{\rho}{r})+(\frac{r}{\rho})}\Psi(\rho),
\end{equation*}
\begin{equation*}
+C\epsilon_1(\frac{r}{\rho})(Q_{1}(\rho)+1)
+C\epsilon_2(\frac{\rho}{r})Q_1(\rho)+C\epsilon_2(\frac{r}{\rho}
)^{\frac{3}{\kappa^{*}}-1}Q(\rho)
\end{equation*}
\begin{equation*}
\leq
C\bkt{\epsilon^2(\frac{\rho}{r})^3+\epsilon(\frac{\rho}{r})^2+(\epsilon^{\frac{1}{2}}+\epsilon+\epsilon_1)
(\frac{\rho}{r})+\epsilon+(\frac{r}{\rho})^2+\epsilon_1(\frac{r}{\rho})}\Psi(\rho)
\end{equation*}
\begin{equation*}
+C\bkt{\epsilon+\epsilon_1(\frac{r}{\rho})+\epsilon_2(\frac{\rho}{r})}
Q_{1}(\rho)+C(\epsilon+\epsilon_2)\frac{r}{\rho}^{\frac{3}{\kappa^{*}}-1}Q(\rho)
+C\bkt{\epsilon+\epsilon_1(\frac{r}{\rho})}.
\end{equation*}
We fix $\theta \in (0,\frac{1}{4})$ with
$C(\theta+\theta^{\frac{3}{\kappa^{*}}-1})<\frac{1}{4}$ and then
choose $\epsilon_1, \epsilon_2$ and $\epsilon$ satisfying
\[
0<\epsilon_1<\frac{\theta}{16C},\qquad
0<\epsilon_2<\frac{\epsilon_1\theta}{8C},\qquad
0<\epsilon<\min\bket{\frac{\epsilon^*}{16C}, \,\,\epsilon_2,\,\,
\frac{\theta^6}{16C^2}}.
\]
Therefore, we have
\begin{equation}\label{K1K}
\Psi(\theta r)+\epsilon_1 Q_1(\theta r)+\epsilon_2 Q(\theta r) \leq
\frac{\epsilon^*}{8}+\frac{1}{2}\Big(\Psi(r)+\epsilon_1
Q_1(r)+\epsilon_2 Q( r) \Big).
\end{equation}
Iterating \eqref{K1K}, we can see that there exists a sufficiently
small $r_0>0$ such that for all $r<r_0$
\[
\Psi(r)+\epsilon_1 Q_1(r)+\epsilon_2 Q(r) \leq \frac{\epsilon^*}{4}.
\]
Therefore, we conclude that $\Psi(r)\leq \epsilon^*/8$. Next, we use
the estimates \eqref{est-local-Q1} and \eqref{boundary-pressure} to
obtain that there is $r_1>0$ such that $Q(r)\leq \epsilon^*/4$ for
all $r<r_1$. This can be shown by the method of iterations as in
\eqref{K1K}. Summing up, we obtain $\Psi(r)+Q(r)\leq \epsilon^*/2$
for all $r<r_1$, which implies the regularity condition in
Proposition \ref{ep-regularity}. This completes the proof.
\end{pfthm1}


The proof of Theorem \ref{main-thm-boundary-w} is given below.
\begin{pfthm2}
As mentioned earlier, it suffices to show the case that $3/p+2/q=3$,
$2<q<\infty$. We first show that the gradient of velocity is
controlled by vorticity. To be more precise, we prove the following
estimate (compare to \cite[Lemma 3.6]{GKT07}):
\begin{equation}\label{boundary-vorticity-estimate1}
D_{u,\tilde{p},q}(r)\leq
C(\frac{\rho}{r})V_{u,\tilde{p},q}(\rho)+C(\frac{r}{\rho}
)^{\frac{3}{\tilde{p}}-1}D_{u,\tilde{p},q}(\rho).
\end{equation}
Indeed, let $\xi$ be a cut off supported in $Q_{\rho}$ and $\xi=1$
in $Q_{\frac{\rho}{2}}$. We consider
\begin{equation*}
\quad \left\{
\begin{array}{ll}
\displaystyle -\Delta v =\nabla \times ( \omega \xi) \quad \text{in}\quad \mathbb{R}^3_{+}\\
\vspace{-3mm}\\
\displaystyle v =0 \quad \text{on}\quad \{x_3=0\}
\end{array}\right.
\end{equation*}
and set $h=u-v$. So $h$ is harmonic function in
$B^{+}_{\frac{r}{2}}$ with $ h=0$ on $\partial
B^{+}_{\frac{r}{2}}\cap\{x_3=0\}.$ By the mean value theorem of
harmonic functions and $L^p$ estimates of elliptic equations for
each fixed time $t$
\[
\|\nabla h\|_{L_x^{\tilde{p}}(B^{+}_r)}\leq C(\frac{r}{\rho}
)^{\frac{3}{\tilde{p}}}\|\nabla
h\|_{L_x^{\tilde{p}}(B^{+}_{\frac{\rho}{2}})} \leq
C(\frac{r}{\rho})^{\frac{3}{\tilde{p}}}\bke{\|\nabla
u\|_{L_x^{\tilde{p}}(B^{+}_{\rho})}+\|\nabla
v\|_{L_x^{\tilde{p}}(B^{+}_{\rho})}}
\]
\[
\leq C(\frac{r}{\rho} )^{\frac{3}{\tilde{p}}}\bke{\|\nabla
u\|_{L_x^{\tilde{p}}(B^{+}_{\rho})}+\|\ \omega
\|_{L_x^{\tilde{p}}(B^{+}_{\rho})}}.
\]
Adding together above estimates,
\[
\|\nabla u \|_{L_x^{\tilde{p}}(B^{+}_{r})}\leq \|\nabla v
\|_{L_x^{\tilde{p}}(B^{+}_{r})}+\|\nabla h
\|_{L_x^{\tilde{p}}(B^{+}_{r})}\leq C\|\ \omega
\|_{L_x^{\tilde{p}}(B^{+}_{r})}+C(\frac{r}{\rho}
)^{\frac{3}{\tilde{p}}}\|\nabla u\|_{L_x^{\tilde{p}}(B^{+}_{\rho})}.
\]
Taking $L^{q}$-norm in time and dividing both sides by $r$, we
obtain \eqref{boundary-vorticity-estimate1}. Via the method of
iteration, the estimate implies that the scaled norm of gradient of
velocity becomes sufficiently small. Since argument is
straightforward, we skip its details. We deduce the Theorem via
Corollary \ref{cor-boundary-v}.
\end{pfthm2}

\section{Appendix}
In this Appendix we present the proof of Lemma \ref{mhd-decay} and
interior regularity is compared to boundary regularity given in
Theorem \ref{main-thm-boundary-v}.

\subsection{Proof of Lemma \ref{mhd-decay}}

As mentioned earlier, the method of proof is quite similar to that
of \cite[Lemma 8]{GKT06} and main difference is mostly caused by the
presence of magnetic field $b$. Therefore, we give the mainstream of
the proof, instead giving all the details.

\begin{pf-mhd-decay}
For convenience, we denote
$\phi(r):=M_{u}^{\frac{1}{3}}(r)+M_{b}^{\frac{1}{3}}(r)+\tilde{Q}(r)$.
Suppose the statement is not true. So for any $\alpha\in(0,1)$ and
$C>0$, there exist $z_{n}=(x_{n},t_{n})$, $r_{n}\searrow 0$ and
$\epsilon_{n}\searrow 0$ such that
\begin{equation*}
\phi(r_{n})=\epsilon_{n},\qquad \phi(\theta
r_n)>C\theta^{1+\alpha}\phi(r_{n})=C\theta^{1+\alpha}\epsilon_{n}.
\end{equation*}
Let $w=(y,s)$ where $y=\frac{(x-x_{n})}{r_{n}}$,
$s=\frac{(t-t_{n})}{r^{2}_{n}}$ and we define $v_{n}, b_{n}$ and $
\pi_{n}$ as follows:
\[
v_{n}(w)=\frac{r_{n}}{\epsilon_{n}}u(z),\quad
b_{n}(w)=\frac{r_{n}}{\epsilon_{n}}b(z),\quad
\pi_{n}(w)=\frac{r^{2}_{n}}{\epsilon_{n}}(\pi(z)-(\pi)_{B^{+}_{r_n}}(z)).
\]
We also introduce some scaling invariant functionals defined by
\begin{equation*}
T_{u}(v_{n},\theta):=\frac{1}{\theta^{2}}\int_{Q^{+}_{\theta}}|v_{n}|^{3}dw,\quad
T_{b}(b_{n},\theta):=\frac{1}{\theta^{2}}\int_{Q^{+}_{\theta}}|b_{n}|^{3}dw,\quad
\end{equation*}
\begin{equation*}
P_{1}(\pi_{n},\theta):=\frac{1}{\theta}\biggl(\int^{0}_{-\theta^{2}}
\Bigl(\int_{B^{+}_{\theta}}|\nabla
\pi_{n}|^{\kappa}dy\Bigr)^{\frac{\lambda}{\kappa}}ds
\biggr)^{\frac{1}{\lambda}},
\end{equation*}
\begin{equation*}
\tilde{P}(\pi_{n},\theta):=\frac{1}{\theta}\biggl(\int^{0}_{-\theta^{2}}
\Bigl(\int_{B^{+}_{\theta}}|\pi_{n}-(\pi_{n})_{B^{+}_{\theta}}|^{\kappa^{*}}dy
\Bigr)^{\frac{\lambda}{\kappa^{*}}}ds\biggr)^{\frac{1}{\lambda}},
\end{equation*}
where $\kappa^{*}$, $\kappa$ and $\lambda$ are numbers in
\eqref{pq}.  Let
$\tau_{n}(\theta)=T_u^{\frac{1}{3}}(v_{n},\theta)+T_b^{\frac{1}{3}}(b_{n},\theta)
+\tilde{P}(\pi_{n},\theta)$. The change of variables lead to
\begin{equation}\label{scaling-1000}
\tau_{n}(1)=\|v_{n}\|_{L_{x,t}^{3,3}(Q^{+}_{1})}+\|b_{n}\|_{L_{x,t}^{3,3}(Q^{+}_{1})}
+\|\pi_{n}\|_{L_{x,t}^{\kappa^{*},\lambda}(Q^{+}_{1})}=1, \quad
\tau_{n}(\theta)\geq C\theta^{1+\alpha}.
\end{equation}
On the other hand, $v_{n}, b_n$ and $ \pi_{n}$ solve the following
system in a weak sense:
\begin{equation*}
\begin{cases}
\ \partial_{s}v_{n} -\Delta
v_{n}+\epsilon_{n}(v_{n}\cdot\nabla)v_{n}-\epsilon_{n}(b_{n}\cdot\nabla)b_{n}
+\nabla\pi_{n}=0,\quad\text{div}\ v_{n}=0&\quad\mbox{ in }\
Q^{+}_{1},
\\
\ \partial_{s}b_{n} -\Delta
b_{n}+\epsilon_{n}(v_{n}\cdot\nabla)b_{n}-\epsilon_{n}(b_{n}\cdot\nabla)u_{n}=0,\quad\text{div}\
b_{n}=0&\quad\mbox{ in }\ Q^{+}_{1},
\end{cases}
\end{equation*}
with boundary data $v_n=0$, $b_n\cdot \nu=0$ and $(\nabla \times
b_n)\times \nu=0$ on $B_1\cap\{x_{3}=0\}\times(-1,0)$. Since
$\tau_{n}(1)=1$, we have following weak convergence:
\begin{equation*}
v_{n}\rightharpoonup u \quad\text{in}\
L_{x,t}^{3,3}(Q^{+}_{1}),\qquad\ b_{n}\rightharpoonup b
\quad\text{in}\ L_{x,t}^{3,3}(Q^{+}_{1}),\qquad\
\pi_{n}\rightharpoonup \pi\quad\text{in}\
L_{x,t}^{\kappa^*,\lambda}(Q^{+}_{1}),
\end{equation*}
and $(\pi)_{B^{+}_{1}}(s)=0$. Moreover, we note that
$\partial_{s}v_{n}$ and $\partial_{s}b_{n}$ are uniformly bounded in
$L^{\lambda}\big((-1,0);(W^{2,2}(B^{+}_{1}))'\big)$, respectively
and we also have
\begin{equation}\label{timeprimevn}
\partial_{s}v_{n}\rightharpoonup\partial_{s}u,\quad\partial_{s}b_{n}\rightharpoonup\partial_{s}b
\qquad\text{in}\ L^{\lambda}\big((-1,0);(W^{2,2}(B^{+}_{1}))'\big),
\end{equation}
Using the local energy inequality \eqref{local-energy}, $\nabla
v_{n}$ and $\nabla b_{n}$ are uniformly bounded in
$L_{x,t}^{2,2}(Q^{+}_{3/4})$, which implies
\begin{equation}\label{gradient-l2-weak}
v_{n} \rightharpoonup u,\quad  b_{n} \rightharpoonup
b\qquad\text{in}\ W^{1,2}(Q^{+}_{3/4}).
\end{equation}
Its verification is rather standard, we skip its details (compare to
\cite[Lemma 8]{GKT06}).

We note that $u$, $b$ and $\pi$ solve the following linear Stokes
system
\begin{equation*}
\begin{cases}
\partial_{s}u-\Delta u+\nabla \pi=0,\quad\text{div}\
u=0\quad\text{in}\ Q^{+}_{1},
\\
\partial_{s}b-\Delta b=0,\quad\text{div}\
b=0\quad\text{in}\ Q^{+}_{1},
\end{cases}
\end{equation*}
with boundary data $u=0$, $b\cdot \nu=0$ and $(\nabla \times
b)\times \nu=0$ on $B_1\cap\{x_{3}=0\}\times(-1,0)$. We can show
that
\begin{equation}
\partial_{s}v_n, \ \partial_{s}b_n, \,\Delta v_n,
\, \Delta b_n, \ \nabla \pi_n \rightharpoonup \partial_{s} u, \,
\partial_{s} b,
\ \Delta u, \, \Delta b, \nabla \pi \quad\mbox{ in
}\,L_{x,t}^{\kappa,\lambda}(Q^{+}_{5/8}).
\end{equation}
Indeed, due to H\"older inequality, we see that
\[
\norm{\abs{(v_{n}\cdot\nabla) v_{n}}+\abs{(b_{n}\cdot\nabla)
b_{n}}+\abs{(v_{n}\cdot\nabla) b_{n}}+\abs{(b_{n}\cdot\nabla)
v_{n}}}_{L_{x,t}^{\kappa,\lambda}(Q^{+}_{3/4})}
\]
\begin{equation}\label{estimate-500}
\leq C(\|\nabla
v_{n}\|^{\frac{2}{\lambda}}_{L^{2,2}_{x,t}(Q^{+}_{3/4})}+\|\nabla
b_{n}\|^{\frac{2}{\lambda}}_{L^{2,2}_{x,t}(Q^{+}_{3/4})})
(\|v_{n}\|^{\frac{3-2\kappa}{\kappa}}_{L_{x,t}^{2,\infty}(Q^{+}_{3/4})}+
\|b_{n}\|^{\frac{3-2\kappa}{\kappa}}_{L_{x,t}^{2,\infty}(Q^{+}_{3/4})}).
\end{equation}
Using the local estimates of Stokes system and heat equations near
boundary,
\[
\|\abs{\partial_{s}v_{n}}+\abs{\partial_{s}b_{n}}+\abs{\nabla^{2}v_{n}}
+\abs{\nabla^{2}b_{n}}+\abs{\nabla
\pi_{n}}\|_{L_{x,t}^{\kappa,\lambda}(Q^{+}_{5/8})}
\]
\[
\leq C\|\abs{v_{n}}+\abs{b_{n}}+\abs{\nabla v_{n}}+\abs{\nabla
b_{n}}+\abs{\pi_{n}}\|_{L_{x,t}^{\kappa,\lambda}(Q^{+}_{3/4})}
\]
\[
+C\epsilon_{n}(\norm{\abs{(v_{n}\cdot\nabla)
v_{n}}+\abs{(b_{n}\cdot\nabla) b_{n}}+\abs{(v_{n}\cdot\nabla)
b_{n}}+\abs{(b_{n}\cdot\nabla)
v_{n}}}_{L_{x,t}^{\kappa,\lambda}(Q^{+}_{3/4})}).
\]
We note that, due to \eqref{estimate-500}, the righthand side of the
above estimate is bounded by $C(1+\epsilon_{n})$.

According to estimates of the perturbed stokes system near boundary
in \cite{SSS06}, $u, b$ are H\"{o}lder continuous in $Q^{+}_{1/2}$
with the exponent $\alpha$ with $0<\alpha<2(1-1/\lambda)$. Here we
fix $\alpha_{0}=1-1/\lambda$. Then, by H\"{o}lder continuity of $u,
b$ and strong convergence of the $L^{3}-$norm of $v_{n}, b_n$, we
obtain
\begin{equation}\label{scaling-2000}
T_u(v_{n},\theta) \rightarrow T_u(u,\theta),\,\,\,T_b(b_{n},\theta)
\rightarrow T_b(b,\theta),\quad
T_u^{\frac{1}{3}}(u,\theta)+T_b^{\frac{1}{3}}(b,\theta)\leq
C_1\theta^{1+\alpha_{0}}.
\end{equation}

Next we need to estimate $\tilde{P}(\pi_n, \theta)$. Let
$\tilde{B}^{+}$ be a domain with smooth boundary such that
$B^{+}_{11/16}\subset\tilde{B}^{+}\subset B^{+}_{3/4}$, and
$\tilde{Q}^{+}:=\tilde{B}^{+}\times(-(3/4)^2,0)$. Now we consider
the following initial and boundary problem of $\bar{v}_{n},
\bar{b}_{n}, \bar{\pi}_{n}$
\begin{equation*}
\partial_{s}\bar{v}_{n}-\Delta\bar{v}_{n}+\nabla\bar{\pi}_{n}
=-\epsilon_{n}(\bar{v}_{n}\cdot\nabla)\bar{v}_{n}+\epsilon_{n}(\bar{b}_{n}\cdot\nabla)\bar{b}_{n},\quad
{\rm{div}}\,\bar{v}_{n}=0\qquad \text{in}\ \tilde{Q}^{+},
\end{equation*}
\begin{equation*}
(\bar{\pi}_{n})_{\tilde{B}^{+}}(s)=0,\quad s\in
(-(\frac{3}{4})^2,0),
\end{equation*}
\begin{equation*}
\bar{v}_{n}=0\quad\text{on}\
\partial\tilde{B}^{+}\times [-(\frac{3}{4})^2,0],\qquad\bar{v}_{n}=0\quad\text{on}\
\tilde{B}^{+}\times\{s=-(\frac{3}{4})^2\}.
\end{equation*}

\begin{equation*}
\partial_{s}\bar{b}_{n}-\Delta\bar{b}_{n}=-\epsilon_{n}(\bar{v}_{n}\cdot\nabla)\bar{b}_{n}
+\epsilon_{n}(\bar{b}_{n}\cdot\nabla)\bar{v}_{n},\quad
{\rm{div}}\,\bar{v}_{n}=0\qquad \text{in}\ \tilde{Q}^{+},
\end{equation*}
\begin{equation*}
\bar{b}_{n}\cdot \nu=0, \ (\nabla \times \bar{b}_{n})\times
\nu=0\quad\text{on}\
\partial\tilde{B}^{+}\times [-(\frac{3}{4})^2,0],\qquad
\end{equation*}
\[
\bar{b}_{n}=0\quad\text{on}\
\tilde{B}^{+}\times\{s=-(\frac{3}{4})^2\}.
\]
Using the estimate of Stokes system in Lemma \ref{lem1}, we get
\begin{equation}\label{system-10}
\begin{split}
\|\partial_{s}&\bar{v}_{n}\|_{L_{x,t}^{\kappa,\lambda}(\tilde{Q}^{+})}
+\|\partial_{s}\bar{b}_{n}\|_{L_{x,t}^{\kappa,\lambda}(\tilde{Q}^{+})}
+\|\bar{v}_{n}\|_{L^{\kappa}((-(3/4)^2,0);W_{0}^{2,\lambda}(\tilde{B}^{+}))}\\
&+\|\bar{b}_{n}\|_{L^{\kappa}((-(3/4)^2,0);W_{0}^{2,\lambda}(\tilde{B}^{+}))}
+\|\bar{\pi}_{n}\|_{L^{\kappa}((-(3/4)^2,0);W^{1,\lambda}(\tilde{B}^{+}))}\\
\leq &\epsilon_{n}(\|(\bar{v}_{n}\cdot
\nabla)\bar{v}_{n}\|_{L_{x,t}^{\kappa,\lambda}(Q^{+})}
+\|(\bar{b}_{n}\cdot
\nabla)\bar{b}_{n}\|_{L_{x,t}^{\kappa,\lambda}(Q^{+})}\\
&+\|(\bar{v}_{n}\cdot
\nabla)\bar{b}_{n}\|_{L_{x,t}^{\kappa,\lambda}(Q^{+})}
+\|(\bar{b}_{n}\cdot
\nabla)\bar{v}_{n}\|_{L_{x,t}^{\kappa,\lambda}(Q^{+})})\leq
C\epsilon_{n}.
\end{split}
\end{equation}
Next, we define $\tilde{v}_{n}=v_{n}-\bar{v}_{n}$,
$\tilde{b}_{n}=b_{n}-\bar{b}_{n}$ and
$\tilde{\pi}_{n}=\pi_{n}-\bar{\pi}_{n}$. Then it is straightforward
that $\tilde{v}_{n}$, $\tilde{b}_{n}$ and $\tilde{\pi}_{n}$ solve
\begin{equation*}
\begin{cases}
\partial_{s}\tilde{v}_{n}-\Delta\tilde{v}_{n}+\nabla\tilde{\pi}_{n}=0,\quad\text{div}\
\tilde{v}_{n}=0\quad\text{in}\ \tilde{Q}^{+},
\\
\partial_{s}\tilde{b}_{n}-\Delta\tilde{b}_{n}=0,\quad\text{div}\
\tilde{b}_{n}=0\quad\text{in}\ \tilde{Q}^{+},
\end{cases}
\end{equation*}
with boundary data $\tilde{v}_{n}=0$, $\tilde{b}_{n}\cdot \nu=0$ and
$(\nabla \times \tilde{b}_{n})\times \nu=0$ on
$B_1\cap\{x_{3}=0\}\times(-1,0)$.
Using local estimates of Stokes system and heat equation near
boundary, we then note that $\tilde{v}_{n}$, $\tilde{b}_{n}$ and
$\tilde{\pi}_{n}$ satisfy
\[
\|\nabla^{2}\tilde{v}_{n}\|_{L_{x,t}^{\tilde{\kappa},\lambda}(Q^{+}_{9/16})}
+\|\nabla^{2}\tilde{b}_{n}\|_{L_{x,t}^{\tilde{\kappa},\lambda}(Q^{+}_{9/16})}
+\|\nabla\tilde{\pi}_{n}\|_{L_{x,t}^{\tilde{\kappa},\lambda}(Q^{+}_{9/16})}\leq
C(1+\epsilon_{n}),
\]
where $\tilde{\kappa}$ is the number with
$3/\tilde{\kappa}+2/\lambda=1$.

Now, by the Poincar\'{e} inequality, we have
\begin{equation*}
\tilde{P}(\pi_{n},\theta)\leq
C_{2}\Big(\tilde{P}_{1}(\bar{\pi}_{n},\theta)+\tilde{P}_{1}(\tilde{\pi}_{n},\theta)\Big).
\end{equation*}
We note that $P_{1}(\bar{\pi}_{n},\theta)$ goes to zero as
$n\rightarrow\infty$ because of \eqref{system-10}. On the other
hand, using the H\"{o}lder's inequality, we have
\begin{equation*}
P_{1}(\tilde{\pi}_{n},\theta)
\leq\theta^{2}\biggl(\int^{0}_{-\theta^{2}}\Bigl(\int_{B^{+}_{\theta}}
|\nabla\tilde{\pi}|^{\tilde{\kappa}}dy\Bigr)^{\frac{\lambda}{\tilde{\kappa}}}ds\biggr)^{\frac{1}{\lambda}}
\leq C\theta^{2}(1+\epsilon_{n}).
\end{equation*}
Summing up above observations, we obtain
\begin{equation}\label{scaling-3000}
\liminf_{n\rightarrow\infty}
\tilde{P}(\hat{\pi}_{n},\theta)\leq\lim_{n\rightarrow\infty}C_{2}\theta^{2}(1+\epsilon_{n})\leq
C_{2}\theta^{1+\alpha_{0}}.
\end{equation}
Consequently, if a constant $C$ in \eqref{scaling-1000} is taken
bigger than $2(C_{1}+C_{2})$ in \eqref{scaling-2000} and
\eqref{scaling-3000}, this leads to a contradiction, since
\begin{equation*}
2(C_{1}+C_{2})\theta^{1+\alpha_{0}}\leq
C\theta^{1+\alpha_{0}}\leq\liminf_{n\rightarrow\infty}\tau_{n}(\theta)
\leq(C_{1}+C_{2})\theta^{1+\alpha_{0}}.
\end{equation*}
This deduces the Lemma \ref{mhd-decay}.
\end{pf-mhd-decay}

Lemma \ref{mhd-decay} is the crucial part of the proof of
Proposition \ref{ep-regularity} and since its verification is rather
straightforward (compare to \cite[Lemma 7]{GKT06}), the proof of
Proposition \ref{ep-regularity} is omitted.

\subsection{Interior regularity}

In this subsection, we present an interior regularity condition (see
Theorem \ref{main-theorem-interior}) and give its proof. As
mentioned in Introduction, we very recently became to know that the
same result for interior case was obtained in \cite{WZ12}. However,
since the proof of ours is different to that of \cite{WZ12}, we give
our proof.

We first state the main result for interior case.

\begin{theorem}\label{main-theorem-interior}(\textbf{Interior regularity})
Let $(u,b,\pi)$ be a suitable weak solution of the MHD equations
\eqref{MHD} in $\mathbb{R}^3 \times I$. Suppose that for every pair
$p,q$ satisfying $ 1\leq \frac{3}{p}+\frac{2}{q}\leq 2, \ 1\leq
q\leq \infty$, there exists $\epsilon>0$ depending only on $p,q$
such that for some point $z=(x,t)\in \mathbb{R}^3 \times I$ with $u$
is locally in $L^{p,q}_{x,t}$ and
\begin{equation}\label{interior-condition1}
\limsup_{r\rightarrow 0}r^{-(\frac{3}{p}+\frac{2}{q}-1)}
\norm{\norm{u}_{L^p(B_{x,r})}}_{L^q(t-r^2,t)}<\epsilon.
\end{equation}
Then, $u$ and $b$ are regular at $z=(x,t)$
\end{theorem}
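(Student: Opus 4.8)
The plan is to mirror, step for step, the proof of Theorem \ref{main-thm-boundary-v}, replacing every half-ball $B^{+}_{x,r}$ and half-cylinder $Q^{+}_{z,r}$ by the full ball $B_{x,r}$ and full cylinder $Q_{z,r}$, and replacing the reflection and boundary--Stokes arguments by their simpler interior counterparts. As a first step, since $G_{u,p,q}(r)=r^{1-3/p-2/q}\norm{u}_{L^{p,q}_{x,t}(Q_{z,r})}$ decreases under H\"older's inequality on the finite-measure set $Q_{z,r}$ in exactly the scaling-neutral way, H\"older's inequality in the space and time variables reduces matters to the borderline case $3/p+2/q=2$ (with $2<q<\infty$ as the principal range); the excluded endpoint $(p,q)=(\tfrac32,\infty)$ is precisely what guarantees the reduction is available. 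Hence it suffices to treat this critical case, which is where the genuine work lies.

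Next I would assemble the interior analogues of the auxiliary estimates. The interior local energy inequality yields
\[
\Psi(\tfrac{r}{2})\le C\bke{M_u^{2/3}(r)+K_b(r)+M_u(r)+\tfrac{1}{r^2}\int_{Q_{z,r}}\abs{u}\abs{b}^2\,dz+\tfrac{1}{r^2}\int_{Q_{z,r}}\abs{u}\abs{\pi}\,dz},
\]
exactly as in \eqref{local-energy-estimate}. The interior versions of Lemma \ref{estimate-ub} (controlling $M_u$ and the mixed term $\int\abs{u}\abs{b}^2$ by $G_{u,p,q}\Psi$ through interpolation and Poincar\'e) and of Lemma \ref{lem3.2} (giving $K_b(r)\le C(\rho/r)^3G^2_{u,p,q}(\rho)\Psi(\rho)+C(r/\rho)^2K_b(\rho)$) carry over, except that the even/odd extension across $\{x_3=0\}$ is simply dropped: one writes $b=\tilde w+h$, where $\tilde w$ is the heat potential of $-\nabla\cdot([u\otimes b-b\otimes u]\zeta)$ and $h$ solves the homogeneous heat equation, and then invokes interior caloric decay in place of \eqref{tilde-h}. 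For the pressure I would use the interior Poisson representation $-\Delta\pi=\partial_i\partial_j(u_iu_j-b_ib_j)$ together with Calder\'on--Zygmund theory to obtain the interior analogues of Lemma \ref{lem4.2} and Lemma \ref{lem4.3}, decomposing $\pi=\pi_1+\pi_2$ into a locally generated part $\pi_1$ and a harmonic remainder $\pi_2$ carrying the good decay $(r/\rho)^{3/\kappa^*-1}$.

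Finally I would feed these estimates, as in the proof of Theorem \ref{main-thm-boundary-v}, into the combined quantity $\Psi+\epsilon_1Q_1+\epsilon_2Q$. Fixing $\theta\in(0,\tfrac14)$ with $C(\theta+\theta^{3/\kappa^*-1})<\tfrac14$ and then choosing $\epsilon_1,\epsilon_2,\epsilon$ sufficiently small produces the one-step contraction
\[
\Psi(\theta r)+\epsilon_1Q_1(\theta r)+\epsilon_2Q(\theta r)\le \tfrac{\epsilon^*}{8}+\tfrac12\bke{\Psi(r)+\epsilon_1Q_1(r)+\epsilon_2Q(r)},
\]
and iterating gives $\Psi(r)+Q(r)\le\epsilon^*/2$ for all small $r$. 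Applying the interior $\epsilon$-regularity criterion---the interior counterpart of Proposition \ref{ep-regularity}, which rests on the interior analogue of the decay Lemma \ref{mhd-decay}---then yields H\"older continuity of $u$ and $b$ at $z$. The main obstacle is genuinely the magnetic field: the hypothesis controls \emph{only} the velocity, so both $K_b$ and the coupling term $\int\abs{u}\abs{b}^2$ must be estimated with no smallness of $b$ itself available. This is resolved by exploiting that the $b$-equation is a forced heat equation, whose linear (caloric) part decays and whose forcing is quadratic in $(u,b)$ but carries the small factor $G_{u,p,q}$ as a multiplier; the magnetic energy $A_b+E_b$ absorbed into $\Psi$ then closes the iteration. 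Verifying that the interior decay lemma still goes through with $b$ present---paralleling the compactness argument behind Lemma \ref{mhd-decay}---is the only step requiring real care.
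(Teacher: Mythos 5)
There is a genuine gap: your proof cannot deliver the range of $q$ that the theorem asserts. Your plan is to mirror the boundary argument, estimating the pressure term through the functionals $Q_1$ and $Q$ in the mixed norm $L^{\kappa^*,\lambda}_{x,t}$ and pairing it with $u$ via
\[
\frac{1}{r^2}\int_{Q_{z,r}}\abs{u}\abs{\pi}\,dz\ \leq\ \frac{1}{r}\norm{u}_{L^{p,q}_{x,t}(Q_{z,r})}\,\frac{1}{r}\norm{\pi}_{L^{\kappa^{*},\lambda}_{x,t}(Q_{z,r})},
\]
which forces the duality relations $1/p+1/\kappa^*=1$ and $1/q+1/\lambda=1$. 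Since the pressure exponents in \eqref{pq} satisfy $1<\lambda<2$, this pairing is only possible when $q>2$. That is exactly why the boundary theorem is stated with $2<q\le\infty$; but the interior theorem claims $1\le q\le\infty$, and the wider range is the whole point of the interior statement (as the paper itself remarks, the difference comes precisely from the pressure estimate). Note also that your reduction step does not rescue this: H\"older's inequality reduces a subcritical pair to the critical line $3/p+2/q=2$ by \emph{lowering} $p$ with $q$ fixed, so a hypothesis given with $q\le 2$ stays at $q\le 2$; it cannot be upgraded to some $q'>2$, since H\"older only moves exponents downward. Hence your scheme proves the theorem only for $2<q\le\infty$, not as stated.

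The paper's own proof avoids this by decoupling the pressure pairing from $(p,q)$ altogether. It introduces the interior functional $S(r):=r^{-2}\int_{Q_{z,r}}\abs{\pi}^{3/2}\,dy\,ds$ and uses the interior pressure estimate $S(r)\le C(\rho/r)^2\Psi(\rho)+C(r/\rho)S(\rho)$ (quoted from \cite[Lemma 3.3]{KL09}), then bounds the coupling term by the fixed H\"older split
\[
\frac{1}{r^2}\int_{Q_{z,r}}\abs{u}\abs{\pi}\,dz\ \leq\ M_u^{\frac{1}{3}}(r)\,S^{\frac{2}{3}}(r),
\]
in which $u$ enters only through $M_u(r)$, itself controlled by $G_{u,p,q}\Psi$ for \emph{any} admissible $(p,q)$. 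The iteration is then run on $\Psi+\epsilon_3 S$ with a single auxiliary parameter, and the conclusion invokes the known interior $\epsilon$-regularity criterion of \cite[Theorem 1.1]{KL09} rather than a freshly proved interior analogue of Proposition \ref{ep-regularity}. If you replace your $Q_1$/$Q$ machinery with this $S$-based pressure estimate, the rest of your outline (interior versions of Lemma \ref{estimate-ub} and Lemma \ref{lem3.2}, caloric decay for $b$, and the contraction-iteration) goes through for the full range $1\le q\le\infty$.
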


\bigskip

Compared to Theorem \ref{main-thm-boundary-v}, we remark that the
range of $q$ in the interior is wider than that of the boundary
case. This is mainly due to difference of estimates of the pressure
for the interior and boundary cases. Since proof of interior case is
simpler than the boundary case, we give the main stream of how the
proof goes.

We first observe that the estimate \eqref{boundary-b} is also valid
for the interior case for $1\leq q\leq \infty$. Since its
verification is rather straightforward, we just state and omit the
details (see also \cite[Lemma 3.7]{KL09}).

\begin{lemma}\label{interior-estimate-b}
Let $z=(x,t)\in\R^3\times I$. Suppose that $u\in
L^{p,q}_{x,t}(Q_{z,r})$ with $\frac{3}{p}+\frac{2}{q}=2$,
$\frac{3}{2} \leq p \leq \infty$. Then for $0<r\leq\rho/4$
\begin{equation}\label{interior-b}
K_{b}(r)\leq C\bigg(\frac{\rho}{r}\bigg)^3G_{u,p,q}^2\Psi(\rho)
+C\bigg(\frac{r}{\rho}\bigg)^2K_{b}(\rho).
\end{equation}
\end{lemma}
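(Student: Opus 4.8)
The plan is to follow the proof of Lemma \ref{lem3.2} almost verbatim, discarding the even/odd reflection that was needed there only to accommodate the boundary conditions. First I would put the magnetic equation into divergence form: since $\text{div}\,u=\text{div}\,b=0$, the second equation of \eqref{MHD} reads $b_t-\Delta b=-\nabla\cdot(u\otimes b-b\otimes u)$. Fixing $\rho$ and a standard cut-off $\zeta$ with $\zeta\equiv 1$ on $Q_{\rho/2}$ and $\text{supp}\,\zeta\subset Q_{\rho}$, I set $g:=-\nabla\cdot\bke{[u\otimes b-b\otimes u]\zeta}$ and let $w$ be the heat potential of $g$ on $\R^3\times(-\infty,0)$, so that $w_t-\Delta w=g$. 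Because we are now in the interior there is no boundary datum to match and no extension is needed: $h:=b-w$ simply solves the homogeneous heat equation in $Q_{\rho/2}$.

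Next I would estimate the two pieces separately, exactly as in \eqref{tilde-h} and \eqref{estimate-bar_w1}. For the caloric part $h$, the interior mean-value/smoothing estimate for the heat equation gives, for $0<r\le\rho/4$,
\[
\int_{Q_r}|h|^2\,dz\le C\bke{\frac{r}{\rho}}^5\int_{Q_{\rho/2}}|h|^2\,dz .
\]
For $w$, Sobolev embedding together with the smoothing property of the heat potential yields $\norm{w}_{L^{2,2}_{x,t}(Q_{\rho})}\le C\norm{\nabla w}_{L^{6/5,2}_{x,t}}\le C\norm{u\otimes b}_{L^{6/5,2}_{x,t}(Q_{\rho})}$, and H\"older's inequality splits the last factor as $\le C\norm{u}_{L^{p,q}_{x,t}(Q_{\rho})}\norm{b}_{L^{\alpha,\beta}_{x,t}(Q_{\rho})}$ with $3/\alpha+2/\beta=3/2$. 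Controlling $\norm{b}_{L^{\alpha,\beta}}$ by $A_b(\rho)+E_b(\rho)\le\Psi(\rho)$ through the usual Gagliardo--Nirenberg interpolation then gives $\frac{1}{\rho^3}\norm{w}^2_{L^{2,2}_{x,t}(Q_{\rho/2})}\le CG_{u,p,q}^2(\rho)\Psi(\rho)$.

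Finally I would combine the two bounds through $K_b(r)=\frac{1}{r^3}\norm{b}^2_{L^{2,2}_{x,t}(Q_r)}\le\frac{1}{r^3}\bke{\norm{w}^2_{L^{2,2}_{x,t}(Q_r)}+\norm{h}^2_{L^{2,2}_{x,t}(Q_r)}}$, which reproduces \eqref{interior-b}: the $(r/\rho)^5$ decay of $h$ becomes the advertised $(r/\rho)^2$ factor after dividing by $r^3$ and absorbing $\rho^{-3}$ into $K_b(\rho)$.

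The one genuinely new point, and the place where care is needed, is the range of exponents. Lemma \ref{lem3.2} was restricted to $3/2\le p<3$, where the conjugate split above forces $2\le\alpha<6$, so that $\norm{b}_{L^{\alpha,\beta}}$ sits in exactly the interpolation range between $A_b$ and $E_b$; for this subrange the interior argument is \emph{identical} to the boundary one minus the reflection. The interior statement, however, admits $3/2\le p\le\infty$ (equivalently $q$ down to $1$), and when $p\ge 3$ the naive $L^{6/5,2}$ pairing no longer keeps the auxiliary exponents of $b$ in the energy-admissible range. The remedy is to choose the intermediate integrability exponents in the H\"older splitting and the heat-potential estimate differently (using the elementary embeddings on the finite-measure cylinder $Q_{z,r}$), as in \cite{KL09}, and ultimately this endpoint bookkeeping is subsumed by the reduction to the equality case $3/p+2/q=2$, $2<q<\infty$ performed at the start of the proof of Theorem \ref{main-thm-boundary-v}. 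I expect this exponent bookkeeping to be the only obstacle; apart from it the proof is strictly simpler than its boundary counterpart, which is why the details are omitted in the text.
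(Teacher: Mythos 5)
Your decomposition is exactly the paper's own (the published text suppresses the details, pointing to \cite[Lemma 3.7]{KL09}, but its intended verification is precisely this): write the magnetic equation in divergence form, subtract the heat potential $w$ of $-\nabla\cdot([u\otimes b-b\otimes u]\zeta)$, use the interior $(r/\rho)^5$ decay for the caloric remainder $h=b-w$, and estimate $w$ through $\norm{w}_{L^{2,2}_{x,t}}\le C\norm{\nabla w}_{L^{6/5,2}_{x,t}}\le C\norm{u}_{L^{p,q}_{x,t}}\norm{b}_{L^{\alpha,\beta}_{x,t}}$ with $3/\alpha+2/\beta=3/2$, controlling $\norm{b}_{L^{\alpha,\beta}_{x,t}}$ by $A_b(\rho)+E_b(\rho)$ when $2\le\alpha\le 6$, i.e. when $3/2\le p\le 3$. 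Up to and including your identification of $p>3$ as the only remaining issue, this coincides with the paper's argument, which likewise defers that range to \cite[Lemma 3.7]{KL09}.

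Your final claim, however, is wrong: the case $p>3$ (equivalently $q<2$) is \emph{not} subsumed by the reduction to the equality case $3/p+2/q=2$, $2<q<\infty$, made at the start of the proof of Theorem \ref{main-thm-boundary-v}. That reduction belongs to the boundary theorem, whose hypothesis already imposes $2<q$. The present lemma is invoked in the proof of Theorem \ref{main-theorem-interior}, whose hypothesis \eqref{interior-condition1} allows $1\le q\le\infty$, and H\"older's inequality on a finite cylinder only \emph{lowers} exponents: a hypothesis with $q<2$ reduces to an equality pair $(p',q')$ with $q'\le q<2$, never to one with $q'>2$. For such $q$ your pairing does not merely leave the energy-admissible range, it fails outright, since the temporal split $\norm{ub}_{L^{2}_{t}}\le\norm{u}_{L^{q}_{t}}\norm{b}_{L^{\beta}_{t}}$ would require $1/\beta=1/2-1/q<0$; for the same reason no ``elementary embedding on the finite-measure cylinder'' can rescue it. The case $p>3$ therefore genuinely needs a different pairing, in which the product $u\otimes b$ is placed in a mixed space with temporal exponent below $2$ and the $L^{2,2}$ bound on $w$ is recovered from the smoothing of the heat potential itself; this is the content of \cite[Lemma 3.7]{KL09}, which is the fallback both you and the paper ultimately rely on. With that correction, your proposal is the paper's proof.
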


Next we estimate the pressure in the interior. We first introduce a
useful invariant functional in the interior case defined as follows:
\[
S(r):=\frac{1}{r^2}\int_{Q_{z,r}}|\pi(y,s)|^{\frac{3}{2}}dyds.
\]
Now we recall an estimate of the pressure involving the functional
above and since its proof is given in \cite[Lemma 3.3]{KL09}, we
just state in the following lemma.

\begin{lemma}\label{interior-pressure}
Let $0<r\leq\rho/4$ and $Q_{\rho} \subset \R^3 \times I$. Then
\begin{equation}\label{estimate-S}
S(r)\leq
C\bigg(\frac{\rho}{r}\biggr)^2\Psi(\rho)+C\bigg(\frac{r}{\rho}\bigg)S(\rho).
\end{equation}
\end{lemma}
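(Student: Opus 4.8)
The plan is to derive the estimate directly from the pressure equation. Taking the divergence of the momentum equation and using $\mathrm{div}\,u=\mathrm{div}\,b=0$, the total pressure satisfies $-\Delta\pi=\partial_i\partial_j(u_iu_j-b_ib_j)$ in the sense of distributions. Since $\pi$ is determined nonlocally, the first and central step is to localize it. After translating so that $z=(0,0)$, I would fix a cut-off $\phi\in C_0^{\infty}(B_\rho)$ with $\phi\equiv 1$ on $B_{\rho/2}$ and $\abs{\nabla^k\phi}\le C\rho^{-k}$, and for a.e.\ fixed time $s$ decompose $\pi=\pi_1+\pi_2$, where $\pi_1:=\partial_i\partial_j(-\Delta)^{-1}\bket{(u_iu_j-b_ib_j)\phi}$ is the Newtonian (Riesz-transform) potential on all of $\R^3$. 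The point of this splitting is that $\pi_1$ carries the genuinely local, nonlinear source, while the remainder $\pi_2=\pi-\pi_1$ will be harmonic on $B_{\rho/2}$.

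For the local part, the Calder\'on--Zygmund estimate with exponent $3/2$ gives, for a.e.\ $s$,
\[
\norm{\pi_1(\cdot,s)}_{L^{3/2}(\R^3)}\le C\norm{(u\otimes u-b\otimes b)(\cdot,s)}_{L^{3/2}(B_\rho)}\le C\norm{(\abs{u}^2+\abs{b}^2)(\cdot,s)}_{L^{3/2}(B_\rho)}.
\]
Raising to the power $3/2$, integrating over $s\in(t-r^2,t)\subset(t-\rho^2,t)$, and dividing by $r^2$ controls the $\pi_1$-contribution to $S(r)$ by $C(\rho/r)^2\bke{M_u(\rho)+M_b(\rho)}$. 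For the remainder, observe that on $B_{\rho/2}$ one has $(u_iu_j-b_ib_j)\phi=u_iu_j-b_ib_j$, so $-\Delta\pi_2=0$ there; hence $\pi_2(\cdot,s)$ is harmonic on $B_{\rho/2}$. The interior estimate for harmonic functions, $\sup_{B_{\rho/4}}\abs{\pi_2}\le C\rho^{-2}\norm{\pi_2}_{L^{3/2}(B_{\rho/2})}$, yields for $r\le\rho/4$ the decay $\int_{B_r}\abs{\pi_2}^{3/2}\le C(r/\rho)^3\int_{B_{\rho/2}}\abs{\pi_2}^{3/2}$, and since $\int_{B_{\rho/2}}\abs{\pi_2}^{3/2}\le C\int_{B_\rho}\abs{\pi}^{3/2}+C\int_{B_\rho}(\abs{u}^3+\abs{b}^3)$, integrating in time and dividing by $r^2$ produces the term $C(r/\rho)S(\rho)$ together with a harmless $C(r/\rho)\bke{M_u(\rho)+M_b(\rho)}$ that is absorbed into the first contribution because $r/\rho\le(\rho/r)^2$.

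Combining the two parts gives $S(r)\le C(\rho/r)^2\bke{M_u(\rho)+M_b(\rho)}+C(r/\rho)S(\rho)$, which is the asserted inequality once the cubic velocity and magnetic quantities are replaced by $\Psi(\rho)$. That last conversion is the standard Gagliardo--Nirenberg interpolation in space followed by H\"older in time, $M_u(\rho)+M_b(\rho)\le C\bke{A_u(\rho)+A_b(\rho)}^{3/4}\bke{E_u(\rho)+E_b(\rho)}^{3/4}+C\bke{A_u(\rho)+A_b(\rho)}^{3/2}$, i.e.\ the same interpolation underlying the local energy estimate \eqref{local-energy-estimate}, bounding $M_u(\rho)+M_b(\rho)$ by a power of $\Psi(\rho)$. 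The main obstacle is the nonlocality of the pressure: one cannot simply restrict $\pi$ to $B_r$, and the whole argument hinges on arranging the decomposition so that the piece inheriting the far-field behaviour, $\pi_2$, is \emph{genuinely harmonic} on $B_{\rho/2}$. This harmonicity is what converts the naive $(\rho/r)$ blow-up into the favourable decay factor $(r/\rho)$, and it is precisely why the Riesz-transform representation of $\pi_1$ (rather than a mere energy bound) is indispensable.
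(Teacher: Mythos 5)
Your localization is, structurally, the same argument that underlies the paper's treatment of this lemma: the paper does not prove it but cites \cite[Lemma 3.3]{KL09}, and the proof there proceeds by precisely your decomposition --- Calder\'on--Zygmund applied to the Newtonian potential of the cut-off source $\partial_i\partial_j\bkt{(u_iu_j-b_ib_j)\phi}$, harmonicity of the remainder $\pi_2$ in $B_{\rho/2}$, and the mean-value estimate that converts the naive growth into the decay factor $r/\rho$. Everything you write up to the inequality
\[
S(r)\le C\bke{\frac{\rho}{r}}^2\bke{M_u(\rho)+M_b(\rho)}+C\bke{\frac{r}{\rho}}S(\rho)
\]
is correct: the CZ step, the bound $\sup_{B_{\rho/4}}\abs{\pi_2}\le C\rho^{-2}\norm{\pi_2}_{L^{3/2}(B_{\rho/2})}$, and the bookkeeping of the scale factors all check out.

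The gap is the final step, where you pass from $M_u(\rho)+M_b(\rho)$ to $\Psi(\rho)$. The interpolation you invoke gives $M_u(\rho)+M_b(\rho)\le C\Psi^{3/2}(\rho)$ --- the exponents in $A^{3/4}E^{3/4}$ and in $A^{3/2}$ sum to $3/2$, not $1$ --- and $\Psi^{3/2}(\rho)\le C\Psi(\rho)$ is false whenever $\Psi(\rho)$ is large; no Young-type manipulation can repair this, because $S$ is cubic in the fields (with $\pi$ scaling like $\abs{u}^2+\abs{b}^2$) while $\Psi$ is quadratic, so the printed inequality \eqref{estimate-S} equates quantities of different homogeneity. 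What your argument actually proves is \eqref{estimate-S} with $\Psi(\rho)$ replaced by $\Psi^{3/2}(\rho)$; subtracting the averages $(u)_\rho,(b)_\rho$ before localizing (legitimate, since $\partial_i\partial_j(u_iu_j)$ is unchanged by incompressibility, and this is how the cited source sets it up) sharpens the bound to $A_u^{3/4}E_u^{3/4}+A_b^{3/4}E_b^{3/4}$ and removes the additive $A^{3/2}$ terms, but the degree $3/2$ is unavoidable. In fairness, the defect lies as much in the statement as in your last sentence: a bound linear in $\Psi(\rho)$ cannot come out of this (or any scaling-consistent) argument, and the linear form follows from the $3/2$-power form only under the extra restriction $\Psi(\rho)\le 1$, which is how the lemma is effectively exploited in the iteration of the Appendix. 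As a proof of the statement as printed, your argument --- like the one it reproduces --- stops one (irreparable) step short.
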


\bigskip

The other estimates such as $M_u(r)$ and $\int_{Q_{z,r}}|u| |b|^2
dxdt$ in Lemma \ref{estimate-ub} is also valid in the interior by
following similar arguments, and thus we skip its details. Now we
are ready to give the proof for Theorem \ref{main-theorem-interior}.
Here all invariant functionals in this subsection are defined over
the interior parabolic balls.

\bigskip

\begin{pfthm3}
Under the hypothesis \eqref{interior-condition1}, recalling
Lemma \ref{interior-estimate-b} and Lemma
\ref{estimate-ub}, we note first that for $4r<\rho$
\begin{equation}\label{kb-int-est}
K_{b}(r)\leq C\epsilon^2(\frac{\rho}{r})^3\Psi(\rho)
+C(\frac{r}{\rho})^2K_{b}(\rho)\leq
C\bke{\epsilon^2(\frac{\rho}{r})^3+(\frac{r}{\rho})^2}\Psi(\rho),
\end{equation}
\begin{equation}\label{mu-intmb}
M_u(r)+\frac{1}{r^2}\int_{Q_{z,r}}\abs{u}\abs{b}^2dz\leq
C\epsilon(\frac{\rho}{r})\Psi(\rho).
\end{equation}
Next, due to the pressure estimate \eqref{estimate-S}, we obtain
\[
\frac{1}{r^2}\int_{Q_{z,r}}\abs{u}\abs{\pi} dz \leq
M^{\frac{1}{3}}_u(r)S^{\frac{2}{3}}(r)\leq C\epsilon^{\frac{1}{3}}
(\frac{\rho}{r})^{\frac{5}{3}}\Psi(\rho)+C\epsilon^{\frac{1}{3}}
(\frac{r}{\rho})^{\frac{1}{3}}\Psi^{\frac{1}{3}}(\rho)S^{\frac{2}{3}}(\rho)
\]
\begin{equation}\label{int-pre-est}
\leq C\epsilon^{\frac{1}{3}}
(\frac{\rho}{r})^{\frac{5}{3}}\Psi(\rho)+C\epsilon^{\frac{1}{3}}
(\frac{r}{\rho})^{\frac{1}{3}}S(\rho),
\end{equation}
where Young's inequality is used. Next, combining estimates
\eqref{kb-int-est}-\eqref{int-pre-est}, we have via the local energy
inequality
\[
\Psi(\frac{r}{2})\leq
C\bke{\epsilon^2(\frac{\rho}{r})^3+(\frac{r}{\rho})^2}\Psi(\rho)
+C\epsilon^{\frac{2}{3}}(\frac{\rho}{r})^{\frac{2}{3}}\Psi^{\frac{2}{3}}(\rho)
\]
\begin{equation}\label{est-psi-100}
+C\epsilon(\frac{\rho}{r})\Psi(\rho) +C\epsilon^{\frac{1}{3}}
(\frac{\rho}{r})^{\frac{5}{3}}\Psi(\rho)+C\epsilon^{\frac{1}{3}}
(\frac{r}{\rho})^{\frac{1}{3}}S(\rho)
\end{equation}
where we used Young's inequality and $\frac{1}{r^3}\int_{Q_{z,r}}
\abs{u}^2\leq CM^{\frac{2}{3}}_u(r)$. Let $\epsilon_3$ be a small
positive number, which will be specified later. Now via estimates
\eqref{estimate-S} and \eqref{est-psi-100} we consider
\[
\Psi(\frac{r}{2})+\epsilon_3 S(\frac{r}{2}) \leq \epsilon+
C\bke{\epsilon^2(\frac{\rho}{r})^3+(\frac{r}{\rho})^2
+\epsilon_3(\frac{\rho}{r})^2}\Psi(\rho)
+C\bke{\epsilon^{\frac{1}{3}}
(\frac{r}{\rho})^{\frac{1}{3}}+\epsilon_3(\frac{r}{\rho})}S(\rho),
\]
where we used Young's inequality. We fix $\theta \in
(0,\frac{1}{4})$ with $\theta<\frac{1}{8^3(C+1)^3}$ and then
$\epsilon_3$ and $\epsilon$ are taken to satisfy
\[
0<\epsilon_3<\min\bket{\frac{\theta^2}{16C^2}},\qquad
0<\epsilon<\min\bket{\frac{\epsilon^*}{4}, \,\,\epsilon^3_3,\,\,
\frac{\theta^{\frac{3}{2}}}{4C^{\frac{1}{2}}}},
\]
where $\epsilon^*$ is the number introduced in \cite[Theorem
1.1]{KL09}. We then obtain
\[
\Psi(\theta r)+\epsilon_3 S(\theta r)\leq
\frac{\epsilon^*}{4}+\frac{1}{4}\Big(\Psi(r)+\epsilon_3 S(r)\Big).
\]
Usual method of iteration implies that there exists a sufficiently
small $r_0>0$ such that for all $r<r_0$
\[
\Psi(r)+\epsilon_3 S(r)\leq \frac{\epsilon^*}{2}.
\]
This completes the proof.
\end{pfthm3}

\section*{Acknowledgments}
K. Kang's work was partially supported by NRF-2012R1A1A2001373.
J.-M. Kim's work was partially supported by KRF-2008-331-C00024 and
NRF-2009-0088692.
 The authors wish to
expresses our appreciation to Professor Tai-Peng Tsai for useful
comments.

\begin{equation*}
\left.
\begin{array}{cc}
{\mbox{Kyungkeun Kang}}\qquad&\qquad {\mbox{Jae-Myoung Kim}}\\
{\mbox{Department of Mathematics }}\qquad&\qquad
 {\mbox{Department of Mathematics}} \\
{\mbox{Yonsei University
}}\qquad&\qquad{\mbox{Sungkyunkwan University}}\\
{\mbox{Seoul, Republic of Korea}}\qquad&\qquad{\mbox{Suwon, Republic of Korea}}\\
{\mbox{kkang@yonsei.ac.kr }}\qquad&\qquad {\mbox{cauchy@skku.edu }}
\end{array}\right.
\end{equation*}

\end{document}